\documentclass{article}
\title{Coherent forests}
\author{Monroe Eskew}
\date{}
\usepackage{color} 
\usepackage{amssymb} 
\usepackage{amsmath} 
\usepackage[utf8]{inputenc} 
\usepackage{enumerate}

\DeclareMathOperator{\dom}{dom}
\DeclareMathOperator{\ran}{ran}

\DeclareMathOperator{\cf}{cf}

\newtheorem{theorem}{Theorem}[section]
\newtheorem{lemma}[theorem]{Lemma}
\newtheorem{proposition}[theorem]{Proposition}
\newtheorem{corollary}[theorem]{Corollary}

\newenvironment{proof}[1][Proof]{\begin{trivlist}
\item[\hskip \labelsep {\bfseries #1}]}{\end{trivlist}}
\newenvironment{definition}[1][Definition]{\begin{trivlist}
\item[\hskip \labelsep {\bfseries #1}]}{\end{trivlist}}

\newenvironment{remark}[1][Remark]{\begin{trivlist}
\item[\hskip \labelsep {\bfseries #1}]}{\end{trivlist}}

\newenvironment{question}[1][Question]{\begin{trivlist}
\item[\hskip \labelsep {\bfseries #1}]}{\end{trivlist}}

\begin{document}
\maketitle

\begin{abstract}
A forest is a generalization of a tree, and here we consider the Aronszajn and Suslin properties for forests.  We focus on those forests satisfying coherence, a local smallness property.  We show that coherent Aronszajn forests can be constructed within ZFC.  We give several ways of obtaining coherent Suslin forests by forcing, one of which generalizes the well-known argument of Todor\v{c}evi\'{c} that a Cohen real adds a Suslin tree.  Another uses a strong combinatorial principle that plays a similar role to diamond.  We show that, starting from a large cardinal, this principle can be obtained by a forcing that is small relative to the forest it constructs.
\end{abstract}

We consider a type of structure called a forest, a generalization of a tree.  Forests contain many trees, but can be much wider than a single tree.  Thomas Jech had previously studied the same type of object under the name ``mess'' \cite{jech73}.  The nicer choice of terminology is due to Christoph Weiß \cite{weiss}.  In contrast to the work of Weiß, we will focus on forests that do not contain long branches.

The notions of being Aronszajn and Suslin carry over from trees to forests.  In this paper, we explore several ways of obtaining large Aronszajn and Suslin forests that also satisfy a certain local smallness property called coherence.  We show that large coherent Aronszajn forests can be constructed within ZFC and by forcing.  Next, we explore a constraint imposed by the P-ideal dichotomy that shows the optimality of some of these results.  Finally, we give three ways of forcing large coherent Suslin forests.  The first is a modification of Jech's method of forcing by local approximations.  The second generalizes the argument of Todor\v{c}evi\'{c} that a Cohen real adds a Suslin tree.  Here, we compose a Cohen-generic function with certain kind of coherent Aronszajn forest, and show that while the structure remains non-trivial, all large antichains destroyed.  The third method uses a guessing principle that plays a similar role to diamond in the construction of Suslin trees.  We show that this principle can be obtained from a Mahlo cardinal $\kappa$ using a forcing of size $\kappa$, yet results in a coherent Suslin forest of size $2^\kappa$.  In other work \cite{eskew}, this last result is applied to the study of saturated ideals.

\begin{definition}
A $(\kappa,X,\mu)$-\emph{forest} is a collection of functions $F$ satisfying:
\begin{enumerate}[(1)]
\item $\{ \dom(f) : f \in F \} = \mathcal{P}_\kappa(X)$.
\item $(\forall f \in F) \ran(f) \subseteq \mu$.
\item For $z \in \mathcal{P}_\kappa(X)$, let $F_z = \{ f \in F : \dom(f) = z \}$.  A forest must satisfy that for $z_0 \subseteq z_1$ in $\mathcal{P}_\kappa(X)$, $F_{z_0} = \{ f \restriction z_0 : f \in F_{z_1} \}$.
\end{enumerate}
\end{definition}

Forests are full of trees.  If $F$ is a $(\kappa,X,\mu)$-forest, and $S = \{x_\alpha : \alpha < \kappa \}$ is an enumeration of distinct elements of $X$, then $T_S = \{ f \in F : (\exists \beta < \kappa) \dom(f) = \{ x_\alpha : \alpha < \beta \} \}$ forms a tree of height $\kappa$ under the subset ordering.

A $(\kappa,X,\mu)$-forest $F$ is called \emph{thin} if for all $z \in \mathcal{P}_\kappa(X)$, $|F_z| < \kappa$.  A collection of functions $F$ is called $\kappa$-\emph{coherent} if for all $f,g \in F$, $|\{ x \in \dom(f) \cap \dom(g) : f(x) \not= g(x) \}| < \kappa$.  If $F$ is a $(\kappa^+,X,\mu)$-forest we say it is  \emph{coherent} if it is $\kappa$-coherent.  Clearly, if $\mu \leq \kappa = \kappa^{<\kappa}$, then any coherent $(\kappa^+,X,\mu)$-forest is thin.

A \emph{chain} in a forest is a subset which is linearly ordered under $\subseteq$.  Two elements $f,g$ in a forest $F$ are said to be \emph{compatible} when they have a common extension $h \in F$.  An \emph{antichain} in a forest is a subset of pairwise incompatible elements.  We say that a $(\kappa,X,\mu)$-forest $F$ is \emph{Aronszajn} if it contains no well-ordered chain of length $\kappa$.  We say it is \emph{Suslin} if it contains no antichain of cardinality $\kappa$.  If $F$ is a $(\kappa,X,\mu)$-forest with $\mu \geq 2$, closed under finite modifications, then $F$ is Suslin only if it is Aronszajn.  This is because we can ``split off'' from any chain of length $\kappa$ to get an antichain of size $\kappa$.







\begin{proposition}
If $F$ is a $(\kappa,X,\mu)$-forest, then for any $z \in \mathcal{P}_\kappa(X)$, $F_z$ is a maximal antichain.
\end{proposition}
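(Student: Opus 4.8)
The plan is to prove the two assertions separately: first that $F_z$ is an antichain, and then that it is maximal.

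For the antichain part, I would argue as follows. Suppose $f, g \in F_z$ are distinct. Since both have domain $z$, any common extension $h$ would have to satisfy $h \restriction z = f$ and $h \restriction z = g$ simultaneously (because $h$ extends both and $\dom(f) = \dom(g) = z$). But this forces $f = g$, a contradiction. Hence distinct elements of $F_z$ are incompatible, so $F_z$ is an antichain. The subtle point to get right is that compatibility means a common extension \emph{within $F$}; but the argument does not even use membership of $h$ in $F$, only that $h \supseteq f$ and $h \supseteq g$ with $\dom(f) = \dom(g)$, so this direction is essentially immediate.

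For maximality, I would take an arbitrary $f \in F$ and show it is compatible with some element of $F_z$. Let $z' = \dom(f)$, which lies in $\mathcal{P}_\kappa(X)$ by condition (1). The natural candidate for a common extension lives over the domain $z \cup z'$, which is again in $\mathcal{P}_\kappa(X)$ since $\mathcal{P}_\kappa(X)$ is closed under unions of fewer than $\kappa$ many sets (here just two). By condition (3) applied to $z' \subseteq z \cup z'$, there is some $h \in F_{z \cup z'}$ with $h \restriction z' = f$, so $h$ extends $f$. Setting $g = h \restriction z$, condition (3) applied to $z \subseteq z \cup z'$ gives $g \in F_z$, and $h$ extends $g$ as well. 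Thus $h$ is a common extension of $f$ and $g$, witnessing that $f$ is compatible with the element $g \in F_z$.

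The main thing to be careful about is the correct invocation of the coherence/restriction condition (3), namely that every function over a smaller domain extends to one over a larger domain in the forest — this is what produces the witness $h$ living in $F_{z \cup z'}$ that simultaneously restricts to $f$ on $z'$ and to a member of $F_z$ on $z$. I expect this to be routine rather than a genuine obstacle, since condition (3) is precisely engineered to supply such extensions; the only genuine hazard is a degenerate reading where one forgets that an element already in $F_z$ is trivially compatible with itself, but the uniform argument above handles every $f \in F$, including those already in $F_z$, without special casing.
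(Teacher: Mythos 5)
Your proposal is correct and follows essentially the same route as the paper: the maximality argument is exactly the paper's proof (use clause (3) to extend $f$ to some $h$ with domain $\dom(f) \cup z$, then observe $h \restriction z \in F_z$), with your $h$ playing the role of the paper's extension $g$. The only difference is that you spell out the trivial antichain half, which the paper leaves implicit since distinct functions with the same domain obviously admit no common extension.
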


\begin{proof}
Let $f \in F$, $z \in \mathcal{P}_\kappa(X)$.  By clause (3) of the definition of forests, there is $g \in F$ such that $f \subseteq g$ and $\dom(g) = \dom(f) \cup z$.  Then $g \restriction z \in F_z$, so $g$ is a common extension of $f$ and something in $F_z$.  $\square$
\end{proof}

The following lemma will be useful in several constructions:

\begin{lemma}
\label{agree}
Suppose $F$ is a coherent $(\kappa^{+},X,\mu)$-forest, and $F$ is closed under $< \! \kappa$ modifications.  Then two functions in $F$ have a common extension in $F$ if and only if they agree on their common domain.
\end{lemma}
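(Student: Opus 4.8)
The plan is to prove the nontrivial implication, that agreement on the common domain yields a common extension; the forward direction is immediate, since if $h \in F$ extends both $f$ and $g$, then $f$ and $g$ each coincide with $h$, hence with each other, on $\dom(f) \cap \dom(g)$.

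So suppose $f, g \in F$ agree on $z := \dom(f) \cap \dom(g)$. Write $z_0 = \dom(f)$, $z_1 = \dom(g)$, and $w = z_0 \cup z_1$. Since $|w| \leq \kappa < \kappa^+$, we have $w \in \mathcal{P}_{\kappa^+}(X)$, so $F_w \neq \emptyset$. Using clause (3) of the forest definition with $z_0 \subseteq w$, I would first choose $p \in F_w$ with $p \restriction z_0 = f$; thus $p$ already extends $f$, and it remains to correct $p$ so that it also extends $g$ without disturbing its agreement with $f$.

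The corrections needed are few. By coherence, the disagreement set $D = \{ x \in z_1 : p(x) \neq g(x) \}$ of $p$ and $g$ on their common domain $w \cap z_1 = z_1$ has size $< \kappa$. The key observation, and the crux of the whole argument, is that $D$ is disjoint from $z_0$: on $z_0 \cap z_1 = z$ we have $p = f$ (since $p$ extends $f$) and $f = g$ (by hypothesis), so $p$ and $g$ already agree there. Hence altering $p$ on $D$ cannot change its values on $z_0$.

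Finally I would define $h$ with $\dom(h) = w$ by setting $h(x) = g(x)$ for $x \in D$ and $h(x) = p(x)$ otherwise. Then $h$ differs from $p \in F$ on the set $D$ of size $< \kappa$ and still maps into $\mu$, so $h \in F$ by closure under $< \kappa$ modifications. Now $h$ extends $f$, since $D \cap z_0 = \emptyset$ gives $h \restriction z_0 = p \restriction z_0 = f$; and $h$ extends $g$, since on $z_1$ it agrees with $g$ on $D$ by construction and on $z_1 \setminus D$ because there $p = g$ already. Thus $h$ is the desired common extension. I expect the only delicate point to be the disjointness $D \cap z_0 = \emptyset$, which is precisely where the hypothesis that $f$ and $g$ agree on their common domain is used.
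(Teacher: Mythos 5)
Your proof is correct and takes essentially the same route as the paper's: obtain a member of $F$ with domain $\dom(f) \cup \dom(g)$, use coherence to see that only $< \kappa$ values need changing (with the agreement hypothesis guaranteeing the changes are consistent), and invoke closure under $< \kappa$ modifications to land back in $F$. The only cosmetic difference is that you use clause (3) to start with a $p$ already extending $f$, so that a single disagreement set $D$ (disjoint from $\dom(f)$) needs correcting, whereas the paper modifies an arbitrary element of $F_{\dom(f)\cup\dom(g)}$ to agree with $f$ and $g$ simultaneously.
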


\begin{proof}
Let $f,g \in F$ agree on $\dom(f) \cap \dom(g)$.  Let $h \in F$ be such that $\dom(h) = \dom(f) \cup \dom(g)$.  By coherence, we can change the values of $h$ on a set of size $< \! \kappa$ to get $h' : \dom(h) \to \mu$ with $h' \restriction \dom(f) = f$, and $h' \restriction \dom(g) = g$.  By the closure of $F$, $h' \in F$.  $\square$
\end{proof}

\section{Aronszajn forests}

The first theorem of this section generalizes of an argument of Koszmider \cite{kosz}.

\begin{lemma}
\label{ext}
Let $\kappa$ be a regular cardinal, and suppose $F = \{ f_\alpha : \alpha < \kappa \}$ is a $\kappa$-coherent set of partial functions from $\kappa$ to $\mu$.
\begin{enumerate}[(a)]
\item There is a function $f: \kappa \to \mu$ such that $\{ f \} \cup F$ is $\kappa$-coherent.
\item If $\mu = \kappa$ and each $f_\alpha$ is $< \! \kappa$ to 1, then there is a $< \! \kappa \!$ to 1 function $f : \kappa \to \kappa$ such that $\{ f \} \cup F$ is $\kappa$-coherent.
\end{enumerate}
\end{lemma}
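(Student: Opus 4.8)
The plan is to build $f$ by a single greedy diagonalization across $\kappa$, assigning to each point the value dictated by the \emph{earliest} indexed function whose domain contains it, and to lean throughout on the regularity of $\kappa$: a union of fewer than $\kappa$ sets each of size $<\kappa$ has size $<\kappa$. For part (a), I would set $E_\alpha=\dom(f_\alpha)\setminus\bigcup_{\beta<\alpha}\dom(f_\beta)$, so that the $E_\alpha$ partition $\bigcup_\alpha\dom(f_\alpha)$, and define $f(x)=f_\alpha(x)$ for $x\in E_\alpha$ and $f(x)=0$ for $x$ in no domain. To verify $\{f\}\cup F$ is $\kappa$-coherent, fix $\alpha$ and look at $\{x\in\dom(f_\alpha):f(x)\neq f_\alpha(x)\}$. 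Any such $x$ lies in some $E_\beta$ with $\beta<\alpha$ (if $\beta=\alpha$ then $f(x)=f_\alpha(x)$), and there $f(x)=f_\beta(x)\neq f_\alpha(x)$, so $x$ lies in the set $D_{\beta,\alpha}$ where $f_\beta$ and $f_\alpha$ disagree, which has size $<\kappa$ by coherence of $F$. Hence the disagreement set is contained in $\bigcup_{\beta<\alpha}D_{\beta,\alpha}$, a union of $|\alpha|<\kappa$ sets of size $<\kappa$, so it has size $<\kappa$. Here $\mu$ plays no role.

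For part (b) the same $f$ need not be $<\!\kappa$ to $1$: a value $\nu$ may be taken once inside cofinally many $E_\alpha$, yielding a fiber of size $\kappa$. The key idea is to reroute, at stage $\alpha$, exactly those points of $E_\alpha$ on which $f_\alpha$ takes a value $\leq\alpha$. Concretely I would keep $f(x)=f_\alpha(x)$ when $x\in E_\alpha$ and $f_\alpha(x)>\alpha$, and collect the remaining points --- the redirected $x\in E_\alpha$ with $f_\alpha(x)\leq\alpha$, together with all $x$ in no domain --- into a set $C$, on which $f$ will be fixed at the end by an injection $e\colon C\to\kappa$ (possible since $|C|\leq\kappa$).

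The verification splits into coherence and the fiber bound. Coherence survives because only a small set is rerouted at each stage: the redirected part $R_\alpha$ of $E_\alpha$ equals $\bigcup_{\nu\leq\alpha}\bigl(E_\alpha\cap f_\alpha^{-1}(\{\nu\})\bigr)$, a union of $|\alpha|+1<\kappa$ fibers of $f_\alpha$ each of size $<\kappa$, hence $|R_\alpha|<\kappa$; so on each $E_\beta$ the function $f$ agrees with $f_\beta$ off $R_\beta$, and the disagreement set of $f$ with $f_\alpha$ is contained in $\bigcup_{\beta\leq\alpha}(D_{\beta,\alpha}\cup R_\beta)$, again a union of $<\kappa$ sets of size $<\kappa$. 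For the fiber bound, a kept point carrying value $\nu$ can arise only at a stage $\alpha<\nu$, so the kept part of $f^{-1}(\{\nu\})$ sits inside $\bigcup_{\alpha<\nu}\bigl(E_\alpha\cap f_\alpha^{-1}(\{\nu\})\bigr)$, a union of $|\nu|<\kappa$ sets of size $<\kappa$; adding the at most one point contributed by the injection $e$ keeps every fiber of size $<\kappa$.

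The crux, and the step I expect to demand the most care, is reconciling the two demands in (b): coherence forbids altering $f_\alpha$ on more than $<\kappa$ points of its domain, while the fiber constraint appears to require suppressing each value wherever it recurs, across possibly $\kappa$-many stages. The threshold rule ``redirect $\nu$ at stage $\alpha$ iff $\nu\leq\alpha$'' threads this needle: each value is redirected at all but $<\kappa$ stages, so fibers stay small, yet only $<\kappa$ values are redirected at any single stage, so coherence is preserved --- and both bounds hold at once precisely because $\kappa$ is regular. The remaining routine point to confirm is that the global injection on $C$ does not reintroduce a large fiber, which it does not.
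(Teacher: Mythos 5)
Your proof is correct and takes essentially the same route as the paper's: the identical first-appearance decomposition $D_\alpha = \dom(f_\alpha)\setminus\bigcup_{\beta<\alpha}\dom(f_\beta)$ for part (a), and for part (b) the same threshold idea of keeping the value $f_\alpha(x)$ exactly when it exceeds the stage index, with regularity of $\kappa$ closing both the coherence and fiber estimates. The only difference is cosmetic: where you collect the redirected points into a set $C$ and patch with a global injection, the paper simply sets $f(x)=\max(\alpha,f_\alpha(x))$ on $D_\alpha$, which performs the same redirection (sending the small values to $\alpha$) and yields the same bound $f^{-1}(\nu)\subseteq g^{-1}(\nu)\cup\bigcup\{f_\gamma^{-1}(\beta):\gamma,\beta\leq\nu\}$ on the fibers.
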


\begin{proof}
For each $\alpha$, let $D_\alpha = \dom(f_\alpha) \setminus \bigcup_{\beta < \alpha} \dom(f_\beta)$.  Let $E = \kappa \setminus \bigcup_\alpha D_\alpha$.  For the first claim, choose any function $g : E \to \mu$, and let
\[ f(\beta) =
\begin{cases}
f_\alpha(\beta)	&\mbox{if } \beta \in D_\alpha \\
g(\beta)			&\mbox{if } \beta \in E
\end{cases}
\]
For any $\alpha$, $\{ \beta : f(\beta) \not= f_\alpha(\beta) \} = \bigcup_{\gamma < \alpha} \{ \beta \in D_\gamma \cap \dom(f_\alpha) : f_\gamma(\beta) \not = f_\alpha(\beta) \}$.  This is a union of $< \! \kappa$ sets of size $< \! \kappa$, so has size $< \! \kappa$.

For the second claim, choose any $< \! \kappa$ to 1 function $g : E \to \kappa$, and let
\[ f(\beta) =
\begin{cases}
\max(\alpha,f_\alpha(\beta))	&\mbox{if } \beta \in D_\alpha \\
g(\beta)						&\mbox{if } \beta \in E
\end{cases}
\]
For any $\alpha$, $\{ \beta : f(\beta) \not= f_\alpha(\beta) \} \subseteq \bigcup_{\gamma \leq \alpha} \{ \beta \in D_\gamma : f_\gamma(\beta) < \gamma$ or $f_\gamma(\beta) \not = f_\alpha(\beta) \}$.  By the hypotheses, this set has size $< \! \kappa$.  For each $\alpha$, $f^{-1}(\alpha) \subseteq g^{-1}(\alpha) \cup \bigcup \{ f^{-1}_\gamma(\beta) : \gamma,\beta \leq \alpha \}$, so $f$ is $< \! \kappa$ to 1. $\square$
\end{proof}

\begin{theorem}
\label{kos}
Let $\kappa$ be a regular cardinal.  For every $\zeta<\kappa$, there is a coherent $(\kappa^+,\kappa^{+\zeta},\kappa)$-forest consisting of $< \! \kappa$ to 1 functions.
\end{theorem}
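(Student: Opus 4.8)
The plan is to reduce the construction of the forest to building a single pairwise $\kappa$-coherent family of $<\kappa$ to $1$ functions, one for each admissible domain, and then to produce that family by recursion on $\zeta$ with Lemma \ref{ext} as the engine. First I would record the reduction. Suppose $\langle e_z : z \in \mathcal{P}_{\kappa^+}(X)\rangle$ is a family with each $e_z : z \to \kappa$ being $<\kappa$ to $1$ and with the $e_z$ pairwise $\kappa$-coherent. Let $F$ be the set of all $<\kappa$ to $1$ functions $f$ with $\dom(f) \in \mathcal{P}_{\kappa^+}(X)$, $\ran(f) \subseteq \kappa$, and $|\{x : f(x) \neq e_{\dom(f)}(x)\}| < \kappa$ (write this last condition as $f =^{<\kappa} e_{\dom(f)}$, i.e. equality off a set of size $<\kappa$). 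Then $F$ is the desired forest: clause (2) is immediate; clause (1) holds because $e_z \in F$ for each $z$; the restriction clause (3) follows from pairwise coherence, since to extend $g \in F_{z_0}$ to $z_1 \supseteq z_0$ one glues $g$ with $e_{z_1}\restriction(z_1\setminus z_0)$, uses $e_{z_0} =^{<\kappa} e_{z_1}$ on $z_0$ to see the result coheres with $e_{z_1}$, and notes that adjoining one $<\kappa$ to $1$ piece to another preserves being $<\kappa$ to $1$; and $F$ is $\kappa$-coherent because any $f,g$ agree off $<\kappa$ with $e_{\dom f}, e_{\dom g}$, which themselves agree mod $<\kappa$ on the common domain. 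So everything reduces to producing the coherent family.

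For the family I would argue by recursion on $\zeta$, maintaining an increasing tower $\langle e^\eta_z : z \in \mathcal{P}_{\kappa^+}(\kappa^{+\eta})\rangle$ for $\eta \le \zeta$, each agreeing with its predecessors on common domains. The case $\zeta = 0$ is trivial: take $e^0_z = \mathrm{id}\restriction z$. The case $\zeta = 1$ is the heart and is where the Koszmider-style argument lives. Since every member of $\mathcal{P}_{\kappa^+}(\kappa^+)$ is bounded, it suffices to build a coherent sequence $\langle c_\beta : \beta < \kappa^+\rangle$ of $<\kappa$ to $1$ functions $c_\beta : \beta \to \kappa$ with $c_{\beta'}\restriction\beta =^{<\kappa} c_\beta$ for $\beta < \beta'$, and then to set $e_z = c_\beta \restriction z$ for any $\beta \supseteq z$; pairwise coherence of these restrictions is then automatic. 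At successor $\beta$ one appends a single value; at a limit $\beta$ of cofinality $\kappa$ one applies Lemma \ref{ext}(b) to a cofinal $\kappa$-sequence of the $c_{\beta_i}$ to obtain a $<\kappa$ to $1$ function $c_\beta$ cohering with all of them, and hence with every $c_\gamma$, $\gamma < \beta$; limits of cofinality $<\kappa$ are similar and easier.

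For the inductive step I would decompose each $z \in \mathcal{P}_{\kappa^+}(\kappa^{+\zeta})$ into its blocks $z \cap [\kappa^{+\eta}, \kappa^{+(\eta+1)})$, each of size $\le \kappa$ and, because the $\kappa^{+(\eta+1)}$ are regular above $\kappa$, bounded in its interval, and define $e_z$ block by block from the level-$(\eta+1)$ structure. At a limit $\zeta$ one takes unions along the tower; the crucial point is that the coherence defect between $e_z$ and $e_{z'}$ is then a union of at most $|\zeta|+1$ pieces, one per block, each of size $<\kappa$, so since $\zeta < \kappa$ and $\kappa$ is regular the total defect is $<\kappa$. This is exactly where the hypothesis $\zeta < \kappa$ is used. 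I expect the successor step — propagating a coherent, $<\kappa$ to $1$ assignment one cardinal upward while keeping ranges inside $\kappa$ — to be the main obstacle, since a single coherent sequence into $\kappa$ can no longer be $<\kappa$ to $1$ once its domain interval has width exceeding $\kappa$, so the naive $\zeta=1$ device does not lift verbatim. The resolution I would pursue is to handle each successor interval by recursing back into its own block decomposition, so that values into $\kappa$ are only ever assigned to $\kappa$-wide pieces where Lemma \ref{ext}(b) applies, carrying the level into the value by a shift bounded by $\zeta < \kappa$ (as in the $\max(\alpha, f_\alpha(\beta))$ trick of Lemma \ref{ext}(b)), and then verifying that the per-level coherence defects again sum to a set of size $<\kappa$.
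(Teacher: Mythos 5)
Your reduction to a pairwise $\kappa$-coherent family $\langle e_z : z \in \mathcal{P}_{\kappa^+}(X) \rangle$ is sound, and your $\zeta=0$, $\zeta=1$, and limit-$\zeta$ steps are essentially correct; in particular, the count ``at most $|\zeta|+1$ blocks, each defect of size $<\kappa$, hence total defect $<\kappa$ by regularity'' is exactly the place where $\zeta<\kappa$ earns its keep. The gap is the successor step $\zeta=\eta+1$, and the resolution you sketch cannot be repaired along the lines you describe. The interval $[\kappa^{+\eta},\kappa^{+(\eta+1)})$ has regular cardinality $\kappa^{+(\eta+1)}>\kappa$, so it admits no decomposition into fewer than $\kappa^{+(\eta+1)}$ pieces of smaller cardinality; any block decomposition of it therefore has at least $\kappa^+$ many blocks, and a single $z$ of size $\kappa$ can meet $\kappa$ many of them. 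A function defined block-by-block on such a $z$ is then a union of $\kappa$ many fragments, and both the coherence defect between two such functions and the preimage of a single ordinal become unions of $\kappa$ many sets of size $<\kappa$, which can have size $\kappa$. So ``recursing back into its own block decomposition'' destroys precisely the two properties you must preserve: the $<\kappa$-many-blocks counting argument is available only at limit $\zeta$, never at successors. The $\max(\alpha,f_\alpha(\beta))$ trick of Lemma~\ref{ext}(b) does not help here either: it protects $<\kappa$-to-1-ness when amalgamating a $\kappa$-indexed coherent family over a single domain of size $\kappa$, not when gluing across a cardinal of size $>\kappa$. Since every level $\zeta\geq 2$ (including the limit levels, which need all $\eta+1<\zeta$) rests on successor steps, the proposal proves the theorem only up to $\zeta=1$.

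The missing idea is the one the paper takes from Koszmider: strengthen the induction hypothesis so that it accepts, as side conditions, \emph{any} $\kappa$-many pairwise-coherent forests $\langle F_\alpha : \alpha<\kappa \rangle$ on subsets of $\kappa^{+\zeta}$ and produces a coherent forest on all of $\kappa^{+\zeta}$ containing their union. With that strengthening, the successor step is a recursion along $\beta<\kappa^{+\zeta}$ building an increasing chain of coherent forests $G_\beta$ on $\beta$; the hard case, $\beta$ a limit of cofinality $\leq\kappa$, is handled by applying the strengthened hypothesis \emph{at the lower level} $\eta$ (legitimate because $|\beta|\leq\kappa^{+\eta}$) to the $\leq\kappa$ many previously built forests $G_{\gamma_i}$ together with the restricted side forests, amalgamating them all simultaneously. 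Your tower carries only one family per level, so at such limit stages you face $\kappa$ many mutually coherent families that must be extended by a single one, with no tool to do it. That amalgamation-with-side-conditions is the real content of the theorem, and it is the piece absent from your proposal.
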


\begin{proof}
We will prove by induction the following stronger statement: For every $\zeta < \kappa$ and every sequence $\langle (X_\alpha,F_\alpha) : \alpha < \kappa \rangle$ such that:
\begin{enumerate}[(1)]
\item each $X_\alpha \subseteq \kappa^{+\zeta}$,
\item each $F_\alpha$ is a $(\kappa^+,X_\alpha,\kappa)$-forest of $< \! \kappa$ to 1 functions,
\item $\bigcup_\alpha F_\alpha$ is $\kappa$-coherent,
\end{enumerate}
there is a coherent $(\kappa^+,\kappa^{+\zeta},\kappa)$-forest $F \supseteq \bigcup_\alpha F_\alpha$ consisting of $< \! \kappa$ to 1 functions.

For $\zeta = 0$, pick a collection $\{ f_\alpha : \alpha < \kappa \}$ such that for each $\alpha$, $f_\alpha \in F_\alpha$, and $\dom(f_\alpha) = X_\alpha$.  By Lemma~\ref{ext}(b), there is a $<\kappa$ to 1 function $f : \kappa \to \kappa$ that coheres with each $f_\alpha$, and we can take $F = \{ g : \dom(g) \subseteq \kappa$ and $| \{ x : f(x) \not= g(x) \} | < \kappa \}$.

Assume $\zeta = \eta +1$ and the statement holds for $\eta$.  For each $\beta<\kappa^{+\zeta}$, let $F_\alpha^\beta = \bigcup_\alpha \{ f \restriction \beta : f \in F_\alpha \}$.  We will construct $F \supseteq \bigcup F_\alpha$ as the union of a $\subseteq$-increasing sequence $\langle G_\beta : \beta < \kappa^{+\zeta} \rangle$ such that for each $\beta$, $G_\beta$ is a coherent $(\kappa^+,\beta,\kappa)$-forest of $< \! \kappa$ to 1 functions containing $\bigcup_\alpha F^\beta_\alpha$. Let $G_0 = \{ \emptyset \}$.  Given $G_\beta$, let $G_{\beta+1} = \{ f : \dom(f) \subseteq (\beta+1)$, $\ran(f) \subseteq \kappa$, and $f \restriction \beta \in G_\beta \}$.

Suppose $\beta$ is a limit ordinal of cofinality $\leq \kappa$, and let $\langle \gamma_i : i < \delta \leq \kappa \rangle$ be cofinal in $\beta$.  The collection $\bigcup_{i<\delta} G_{\gamma_i} \cup \bigcup_{\alpha<\kappa} F^\beta_\alpha$ is $\kappa$-coherent, because $(\forall \alpha < \kappa) (\forall f \in F^\beta_\alpha)(\forall i < \delta)(f \restriction \gamma_i \in F^{\gamma_i}_\alpha \subseteq G_{\gamma_i})$.  Since $\beta$ has cardinality $\leq \kappa^{+\eta}$, the inductive assumption implies that we can extend to a forest $G_\beta$ with the desired properties.

Suppose $\beta$ is a limit ordinal of cofinality $> \kappa$.  Let $G_\beta = \bigcup_{\gamma<\beta} G_\gamma$.  Then $G_\beta$ is a forest with the desired properties because $\bigcup_{\alpha<\kappa} F^\beta_\alpha = \bigcup_{\gamma<\beta} (\bigcup_{\alpha<\kappa} F^\gamma_\alpha)$.  Finally, we let $F = \bigcup_{\beta < \kappa^{+\zeta}} G_\beta$.

Now assume $\zeta$ is a limit ordinal of cofinality $<\kappa$, and the statement holds for all $\eta < \zeta$.  Let $\langle \gamma_i : i < \delta = \cf(\zeta) \rangle$ be an increasing cofinal sequence in $\zeta$.  Like above, recursively build an increasing sequence $\langle G_i : i < \delta \rangle$ such that each $G_i$ is a $(\kappa^+,\kappa^{+\gamma_i},\kappa)$-forest of $<\kappa$ to 1 functions extending $\bigcup_\alpha F^{\gamma_i}_\alpha$.  This is done by applying the inductive hypothesis for $\kappa^{+\gamma_i}$ to $\bigcup_\alpha F^{\gamma_i}_\alpha \cup \bigcup_{j<i} G_j$.  We may also assume each $G_i$ is closed under $<\kappa$ modifications.  Simply let $F$ be the collection of functions $f$ such that $\dom(f) \subseteq \kappa^{+\zeta}$, and $(\forall i < \delta) f \restriction \gamma_i \in G_{\gamma_i}$.  Clearly $F \supseteq \bigcup_\alpha F_\alpha$.  

First note that if $f \in F$ were not $< \kappa$ to 1, then there would be some $i < \delta$ such that $f \restriction \kappa^{+\gamma_i}$ is not $< \kappa$ to 1, which is false.  If $f,g \in F$ were to disagree at $\kappa$ many points, then there would be some $i < \delta$ such that $f \restriction \kappa^{+\gamma_i}$ and $g \restriction \kappa^{+\gamma_i}$ disagree at $\kappa$ many points, which is false.  Second, we check that for any $z \in \mathcal{P}_{\kappa^+}(\kappa^{+\zeta})$, there is an $f \in F$ such that $\dom(f) = z$.  We can recursively build a sequence $\langle g_i : i < \delta \rangle$ such that for all $i<j<\delta$, $g_i \in G_i$, $\dom(g_i) = z \cap \kappa^{+\gamma_i}$, and $g_i \subseteq g_j$.  If we have built such a sequence up to $j < \delta$, then $\bigcup_{i<j} g_i \in G_j$, because for any $h \in G_j$ with domain $z \cap \kappa^{+\gamma_j}$, the set of disagreements with $\bigcup_{i<j} g_i$ has size $< \kappa$.  Let $f = \bigcup_{i<\delta} g_i$.  $\square$
\end{proof}

\begin{remark}
Koszmider showed that in the case $\kappa = \omega$, if $\lambda$ is a singular cardinal of cofinality $\omega$, and $\square_\lambda$ and $\lambda^\omega = \lambda^+$ hold, then the induction can push through $\lambda$ as well.  The argument generalizes almost verbatim to show for any regular $\kappa$, the induction can go forward at $\lambda$ of cofinality $\kappa$, under the assumptions $\square_\lambda$ and $\lambda^\kappa = \lambda^+$. (The reader may want to verify this.)  As a consequence, we get that in $L$, for every regular $\kappa$ and every $\lambda \geq \kappa$, there is a coherent, $(\kappa^+,\lambda,\kappa)$-forest of $<\kappa$ to 1 functions.
\end{remark}

Recall that a partial order $\mathbb{P}$ is called $\kappa$-Knaster if for any $A \subseteq \mathbb{P}$ of size $\kappa$, there is $B \subseteq A$ of size $\kappa$ that consists of pairwise compatible elements.

\begin{corollary}For every regular cardinal $\kappa$ and every $\zeta < \kappa$, there is a coherent $(\kappa^+,\kappa^{+\zeta},\kappa)$-forest, which is Aronszajn, does not have the $2^{<\kappa}$ or the $\kappa^+$ chain condition, but is $(2^\kappa)^+$-Knaster.  If $\zeta$ is finite or $2^{<\kappa} < \kappa^{+\omega}$, then the forest is $(2^{<\kappa} \cdot \kappa^{+})^+$-Knaster.
\end{corollary}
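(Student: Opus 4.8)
The plan is to take the coherent $(\kappa^+,\kappa^{+\zeta},\kappa)$-forest $F$ of $<\!\kappa$ to $1$ functions produced by Theorem~\ref{kos}, and read off each claimed property. First I would address the Aronszajn property. Since $F$ consists of $<\!\kappa$ to $1$ functions from subsets of $\kappa^{+\zeta}$ into $\kappa$, a well-ordered chain of length $\kappa^+$ in $F$ would have union a single function $h$ whose domain has size $\kappa^+$; but $h$ would be $<\!\kappa$ to $1$ (this property is preserved under increasing unions of length $\kappa^+$, using regularity of $\kappa^+$), forcing $|\ran(h)| = \kappa^+ > \kappa$, a contradiction. So $F$ has no chain of length $\kappa^+$ and is Aronszajn. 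For the failure of chain conditions: by Proposition~\ref{kos}... rather, by the Proposition that each $F_z$ is a maximal antichain, fixing any $z$ with $|z| = \kappa$ gives an antichain $F_z$; and one should argue $|F_z| \geq \kappa^+$ (and indeed $\geq 2^{<\kappa}$) to defeat both the $2^{<\kappa}$-c.c.\ and the $\kappa^+$-c.c. The size of $F_z$ can be computed from the tree structure: along an enumeration of $z$ of type $\kappa$, the associated tree $T_z$ splits cofinally (each value in $\kappa$ is available), so $|F_z| = \kappa^{<\kappa}$ restricted appropriately — I would verify $|F_z| \geq 2^{<\kappa}$ and $\geq \kappa^+$ directly.

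For the Knaster properties, the key tool is Lemma~\ref{agree}, which says two elements of a coherent forest closed under $<\!\kappa$ modifications are compatible exactly when they agree on their common domain. (I would first note that $F$, or a suitable version of it, may be assumed closed under $<\!\kappa$ modifications, so that Lemma~\ref{agree} applies.) Thus compatibility reduces to pointwise agreement on overlaps. Given $A \subseteq F$ with $|A| = (2^\kappa)^+$, I would find a large pairwise-agreeing subfamily. The natural approach is a $\Delta$-system plus a counting argument: thin out $A$ so that the domains form a $\Delta$-system with root $r$, $|r| \le \kappa$, and then further thin so that all members agree on $r$. The number of possible behaviors on the root is at most $\mu^{|r|} = \kappa^{\le\kappa} \le 2^\kappa$, so among $(2^\kappa)^+$ elements, $(2^\kappa)^+$ many share the same restriction to $r$. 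By the $\Delta$-system structure their pairwise common domains lie in $r$, where they now agree, so by Lemma~\ref{agree} they are pairwise compatible. This gives $(2^\kappa)^+$-Knaster.

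For the sharper bound $(2^{<\kappa}\cdot\kappa^+)^+$-Knaster when $\zeta$ is finite or $2^{<\kappa} < \kappa^{+\omega}$, the improvement must come from controlling the root size more tightly. The main obstacle, and where I expect the real work to be, is the $\Delta$-system step: for a family of size $\lambda := (2^{<\kappa}\cdot\kappa^+)^+$ of sets each of size $\le\kappa$ drawn from $\kappa^{+\zeta}$, one needs a $\Delta$-system of size $\lambda$ whose root has size $<\!\kappa$ (not merely $\le\kappa$), so that the number of possible restrictions to the root is at most $\mu^{<\kappa} = \kappa^{<\kappa} = 2^{<\kappa}$, leaving $\lambda > 2^{<\kappa}$ elements agreeing on the root. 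Whether such a $\Delta$-system lemma applies depends on the cardinal arithmetic of $\kappa^{+\zeta}$, which is precisely why the hypothesis splits into the cases $\zeta$ finite (so $\kappa^{+\zeta}$ is a successor of bounded index and the standard $\Delta$-system lemma for $<\!\kappa$-sized sets applies to families of size $(2^{<\kappa}\cdot\kappa^+)^+$) versus $2^{<\kappa} < \kappa^{+\omega}$ (which bounds the relevant powers enough to run the argument at limit $\zeta$). I would carry out the $\Delta$-system estimate carefully in each case, and then the compatibility conclusion again follows immediately from Lemma~\ref{agree}.
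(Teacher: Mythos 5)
Your Aronszajn argument is correct (and is in fact a more direct route than the paper's, which deduces Aronszajn from its $\kappa^+$-c.c.\ argument), and your $(2^\kappa)^+$-Knaster argument matches the paper's. But two of the four claims rest on steps that fail. The first is the failure of the $\kappa^+$-chain condition. You propose to show $|F_z|\geq\kappa^+$ for some $z$ of size $\kappa$, computing the size of $F_z$ from a tree that ``splits cofinally.'' This contradicts coherence: any two members of $F_z$ have the same domain, hence differ at fewer than $\kappa$ points, so $F_z$ is contained in the set of $<\!\kappa$ modifications of any single one of its members, giving $|F_z|\leq\kappa^{<\kappa}=2^{<\kappa}$. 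Under GCH this equals $\kappa$, so no antichain of the form $F_z$ can defeat the $\kappa^+$-c.c.; this is exactly why the corollary lists the two chain conditions separately and why the paper gives two separate arguments. The paper's argument for the $\kappa^+$-c.c.\ failure is a pressing-down argument using the $<\!\kappa$ to $1$ property: fix an enumeration $\{\alpha_\beta:\beta<\kappa^+\}$ of distinct ordinals, choose $f_\gamma\in F$ with domain $\{\alpha_\beta:\beta<\gamma\}$, stabilize the value $f_{\gamma+1}(\alpha_\gamma)=\xi$ on a stationary set of $\gamma$ of cofinality $\kappa$, note that $\{\beta<\gamma: f_{\gamma+1}(\alpha_\beta)=\xi\}$ is bounded below such $\gamma$ (this is where $<\!\kappa$ to $1$ enters), press this bound down to a fixed $\eta$ on a stationary set, and conclude that any two of the resulting functions indexed above $\eta$ disagree at a common point of their domains. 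Without some such use of the $<\!\kappa$ to $1$ property, the claim is simply not available.

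The second gap is the sharper Knaster bound. A delta-system of size $\theta=(2^{<\kappa}\cdot\kappa^+)^+$ with root of size $<\!\kappa$ cannot be extracted in general: the domains have size $\leq\kappa$, not $<\!\kappa$, and they may all contain one fixed set of size exactly $\kappa$, which forces every root to have size $\kappa$. Moreover, even asking for a root of size $\leq\kappa$ runs into trouble: the delta-system lemma at $\theta$ requires $\lambda^{\kappa}<\theta$ for all $\lambda<\theta$, and this fails whenever $2^\kappa\geq\theta$, a situation consistent with $\zeta$ finite --- so the case hypothesis does not rescue this route, and your diagnosis of where the hypothesis is used is off. The two ideas you are missing are: (i) it is coherence, not counting of arbitrary functions, that bounds the number of possible restrictions to a fixed $r$ of size $\leq\kappa$ --- namely $|F_r|\leq 2^{<\kappa}$ --- whereas your count $\mu^{|r|}\leq 2^\kappa$ can never get below $2^\kappa$, which is why you were driven to shrink the root; and (ii) the paper avoids the delta-system lemma at $\theta$ altogether. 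It splits on $|S|$ where $S=\bigcup_\alpha\dom(f_\alpha)$: if $|S|<\theta$, then by the case hypotheses $|S|\leq\kappa^{+n}$ for some $n<\omega$, so $\mathcal{P}_{\kappa^+}(S)$ has a cofinal subfamily of size $|S|$, and by coherence a family $G\subseteq F$ of size $|S|\cdot 2^{<\kappa}<\theta$ suffices to extend every $f_\alpha$; pigeonhole then puts $\theta$ many $f_\alpha$ below a single element of $G$. If $|S|=\theta$, a Fodor-style argument (using only regularity of $\theta$) yields $\theta$ many $f_\alpha$ whose pairwise domain intersections lie in a single set $S_0$ of size $<\theta$, and one applies the first case to the restrictions to $S_0$. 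The role of ``$\zeta$ finite or $2^{<\kappa}<\kappa^{+\omega}$'' is precisely to keep $|S|$ (or $|S_0|$) at some $\kappa^{+n}$ with $n$ finite, where $\mathcal{P}_{\kappa^+}(\kappa^{+n})$ has a cofinal set of size $\kappa^{+n}$; it is not about licensing a delta-system lemma.
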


\begin{proof}
Let $F$ be given by Theorem~\ref{kos}.  We may assume $F$ is closed under $< \! \kappa$ modifications. To see the failure of the $2^{<\kappa}$ chain condition, note that for any $z \subseteq \kappa^{+\zeta}$ of size $\kappa$, $F_z$ is an antichain of size $2^{<\kappa}$.

Let $\{\alpha_\beta : \beta < \kappa^+ \}$ be any enumeration of distinct ordinals in $\kappa^{+\zeta}$, and for each $\gamma < \kappa^+$, let $f_\gamma \in F$ have domain $\{ \alpha_\beta : \beta < \gamma \}$.  Since each $f \in F$ maps into $\kappa$, there is a $\xi<\kappa$ and a stationary subset $S_0 \subseteq \{ \gamma < \kappa^+ : \cf(\gamma) = \kappa \}$ such that for all $\gamma \in S_0$, $f_{\gamma+1}(\alpha_\gamma) = \xi$.  Since each $f \in F$ is $< \! \kappa$ to 1, each set $\{ \beta < \gamma : f_{\gamma+1}(\alpha_\beta) = \xi \}$ is bounded below $\gamma$ when $\cf(\gamma) = \kappa$.  Thus there is an $\eta < \kappa^+$ and a stationary $S_1 \subseteq S_0$ such that for all $\gamma \in S_1$, $\{ \beta < \gamma: f_{\gamma+1}(\alpha_\beta) = \xi \} \subseteq \eta$.  Therefore, for any $\gamma_0 < \gamma_1$ in $S_1 \setminus \eta$, $f_{\gamma_0+1}(\alpha_{\gamma_0}) \not= f_{\gamma_1+1}(\alpha_{\gamma_0})$.  This shows that $F$ does not have the $\kappa^+$ chain condition.

It also shows that $F$ is Aronszajn.  For otherwise, let $\langle f_\alpha : \alpha < \kappa^+ \rangle$ be a strictly increasing $\subseteq$-chain in $F$.  Let $\{\xi_\beta : \beta < \kappa^+ \} = \bigcup_\alpha \dom(f_\alpha)$, and for each $\gamma$ let $g_\gamma = (\bigcup_\alpha f_\alpha) \restriction \{ \xi_\beta : \beta < \gamma \}$.  Then $\langle g_\gamma : \gamma < \kappa^+ \rangle$ is a strictly increasing chain, but by the above paragraph, it contains an antichain of size $\kappa^+$, contradiction.

To show the $(2^\kappa)^+$-Knaster property, let $\{f_\alpha : \alpha < (2^\kappa)^+ \} \subseteq F$.  Let $T_0 \subseteq (2^\kappa)^+$ have size $(2^\kappa)^+$ and be such that $\{ \dom(f_\alpha) : \alpha \in T_0 \}$ forms a delta-system with root $r$.  Let $T_1 \subseteq T_0$ have size $(2^\kappa)^+$ and be such that for a fixed $g$, $f_\alpha \restriction r = g$ for all $\alpha \in T_1$.  The union of any two of these is in $F$. 

For the case where $\zeta < \omega$ or $2^{<\kappa} < \kappa^{+\omega}$, let $\theta =(2^{<\kappa} \cdot \kappa^{+})^+$.  First note that it is easy to see by induction that for every $n < \omega$, $\mathcal{P}_{\kappa^+}(\kappa^{+n})$ has a cofinal subset of size $\kappa^{+n}$.  Let $A = \{ f_\alpha : \alpha < \theta \} \subseteq F$, and let $S = \bigcup_{\alpha} \dom (f_{\alpha})$.  


Suppose first that $|S| < \theta$.  There is an $R \subseteq \mathcal{P}_{\kappa^+}(S)$ that covers $\{ \dom (f_{\alpha} ): \alpha <\theta \}$ and has cardinality $|S|$.  Therefore, by the coherence of $F$, there is a $G \subseteq F$ of cardinality $\leq |S| \cdot 2^{<\kappa} < \theta$ such that for all $\alpha<\theta$, there is $g \in G$ with $f_{\alpha} \subseteq g$.  Therefore there is a $g_{0} \in G$ which is a common lower bound to $\theta$ many $f_{\alpha}$.

Now suppose that $|S|=\theta$.  Since $\theta$ is regular and $\theta > \kappa^+$, we can use the delta-system argument to get an $S_{0} \subseteq S$ of cardinality less than $\theta$ and a $T_{0} \subseteq \theta$ of cardinality $\theta$ such that for all $\alpha_{0},\alpha_{1} \in T_{0}$, $\dom(f_{\alpha_{0}}) \cap \dom(f_{\alpha_{1}}) \subseteq S_{0}$.  By the above paragraph, there is a $T_{1} \subseteq T_{0}$ of cardinality $\theta$ such that for any $\alpha_{0}, \alpha_{1} \in T_{1}$, $f_{\alpha_{0}}$ and $f_{\alpha_{1}}$ agree on their common domain contained in $S_{0}$. $\square$




\end{proof}

One may ask whether the condition ``$< \! \kappa$ to 1'' can be strengthened to ``1 to 1'' in Theorem~\ref{kos}.  But this cannot always be achieved:

\begin{proposition}
\label{ad}
If there is a coherent $(\kappa^+,\lambda,\kappa)$-forest consisting of injective functions, then there are $\lambda$ many almost disjoint subsets of $\kappa$.
\end{proposition}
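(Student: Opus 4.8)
The plan is to reduce the statement to the following clean task: produce an injection of $\lambda$ into the cofinal branches of a single tree of size $\le\kappa$ and height $\kappa$. The reduction rests on a standard observation. Suppose $T$ is a tree of height $\kappa$ whose underlying set has size $\le\kappa$, and suppose $\langle b_\xi : \xi<\lambda\rangle$ are pairwise distinct cofinal branches. Identifying the nodes of $T$ with $\kappa$, put $A_\xi=\{b_\xi\restriction\alpha : \alpha<\kappa\}$. Each $A_\xi$ has size $\kappa$, and any two are almost disjoint: if $b_\xi\ne b_{\xi'}$ first split at some level $\gamma<\kappa$, then $b_\xi\restriction\alpha = b_{\xi'}\restriction\alpha$ exactly when $\alpha\le\gamma$, so $A_\xi\cap A_{\xi'}=\{b_\xi\restriction\alpha : \alpha\le\gamma\}$ has size $<\kappa$. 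Thus it suffices to attach to each $\xi<\lambda$ a branch $b_\xi$ through such a tree in an injective way. (If $\lambda\le\kappa$ the conclusion is immediate, so assume $\lambda>\kappa$.)

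To manufacture the branches I would use injectivity to extract, for each column $\xi$, a $\kappa$-sequence of pieces of information. Fix a sequence $\langle R_\alpha : \alpha<\kappa\rangle$ of ``spines,'' each a member of $\mathcal{P}_{\kappa^+}(\lambda)$ of size $\kappa$, together with reference functions $h_\alpha\in F_{R_\alpha}$. For each $\xi<\lambda$ and each $\alpha<\kappa$, use clause (3) of the definition of forests to extend $h_\alpha$ to some $g_{\xi,\alpha}\in F_{R_\alpha\cup\{\xi\}}$, and record a value $b_\xi(\alpha)\in\kappa$ measuring where $g_{\xi,\alpha}(\xi)$ sits relative to $h_\alpha$ (for instance, the rank of $g_{\xi,\alpha}(\xi)$ among the values of $h_\alpha$). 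Because $g_{\xi,\alpha}$ is injective, $g_{\xi,\alpha}(\xi)$ lies outside $\ran(h_\alpha)$ and this record is meaningful. One then sets $b_\xi=\langle b_\xi(\alpha) : \alpha<\kappa\rangle$, viewed as a branch of the full tree of functions from ordinals below $\kappa$ into $\kappa$, and passes to the downward closure of $\{b_\xi : \xi<\lambda\}$ as the tree $T$. The guiding idea is that injectivity lets a single function separate all the columns lying in its domain at once, so that two distinct columns are forced to receive different records at some level and, once separated inside a tree of branches, remain separated.

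The main obstacle, and where the real work lies, is twofold. First, the value $g_{\xi,\alpha}(\xi)$ is not canonical: coherence controls the forest functions only up to $<\kappa$ disagreement, so it does not pin the value taken at the single new point $\xi$, and two extensions of $h_\alpha$ over $\xi$ may a priori disagree there. Hence the spines and references must be chosen so that the records $b_\xi$ are genuinely injective in $\xi$; this is delicate precisely because a single $\kappa$-sized domain can ``see'' only $\kappa$ of the $\lambda$ columns, so no one probe distinguishes all of them, and the separations must be arranged to accumulate coherently across the $\kappa$ levels. Second, one must keep each level $\{b_\xi\restriction\alpha : \xi<\lambda\}$ of size $\le\kappa$, so that $T$ is identifiable with $\kappa$ and the $A_\xi$ are genuinely subsets of $\kappa$. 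Balancing injectivity of the encoding against smallness of the tree, in the face of the near-agreement that coherence imposes, is the heart of the argument, and injectivity is exactly the feature—absent for merely $<\!\kappa$ to $1$ functions—that makes it possible.
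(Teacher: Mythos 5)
There is a genuine gap here: your proposal is a plan whose two central steps are exactly the ones left unresolved, and neither is a technicality. First, your encoding $b_\xi(\alpha)$ is not well-defined: coherence constrains two extensions of $h_\alpha$ to $R_\alpha \cup \{\xi\}$ only up to a disagreement set of size $<\kappa$, and the single new point $\xi$ can always lie inside that set, so the value $g_{\xi,\alpha}(\xi)$ (hence its rank relative to $\ran(h_\alpha)$) depends on an arbitrary choice of extension. Worse, even after fixing choices, nothing makes $\xi \mapsto b_\xi$ injective: the injectivity of forest functions separates two indices $\xi \neq \xi'$ only when both lie in the domain of \emph{one} function (then $g(\xi) \neq g(\xi')$); it says nothing about values read off from two \emph{different} extensions $g_{\xi,\alpha}$ and $g_{\xi',\alpha}$, which may agree at every level $\alpha$. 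Second, the reduction itself trades the problem for a harder one: a tree of height $\kappa$ with $\le \kappa$ nodes and $\lambda$ cofinal branches is a weak-Kurepa-type object. For $\kappa = \omega_1$, $\lambda = \omega_2$ such trees consistently fail to exist (e.g.\ under PFA every tree of size and height $\omega_1$ has at most $\omega_1$ cofinal branches), so your intermediate target is not a ZFC theorem and could only come from an essential use of the forest hypothesis --- precisely the construction your sketch defers. Keeping every level of size $\le\kappa$ while keeping $\lambda$ branches distinct is the tension you name, but naming it does not discharge it.

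The paper's proof shows that no tree and no pointwise encoding are needed, because injectivity can be exploited at the level of \emph{ranges}, where the $<\kappa$ ambiguity from coherence is harmless. Fix $\lambda$ pairwise disjoint sets $x \subseteq \lambda$ of size $\kappa$, choose $f_x \in F$ with $\dom(f_x) = x$, and take the family $\{ \ran(f_x) \}$; each has size $\kappa$ by injectivity. For $x \neq y$, pick a single $f_{x \cup y} \in F$: injectivity makes $A = \ran(f_{x\cup y} \restriction x)$ and $B = \ran(f_{x \cup y} \restriction y)$ disjoint, and coherence gives $|A \triangle \ran(f_x)| < \kappa$ and $|B \triangle \ran(f_y)| < \kappa$, whence $|\ran(f_x) \cap \ran(f_y)| < \kappa$. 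Note that this is exactly the move your approach lacks: the two columns (here, the two blocks $x$ and $y$) are compared \emph{inside one common function}, rather than through separate probes against a fixed spine, so injectivity does the separating and coherence only needs to preserve it up to a small error.
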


\begin{proof}
Let $F$ be such a forest, and for each $z \in \mathcal{P}_{\kappa^+}(\lambda)$, choose $f_z \in F$ with domain $z$.  Let $S$ be a collection of $\lambda$ many pairwise disjoint subsets of $\lambda$, each of cardinality $\kappa$.  For $x \not= y$ in $S$, $\ran(f_x)$ is almost disjoint from $\ran(f_y)$.  This is because the sets $A = \ran(f_{x \cup y} \restriction x)$ and $B = \ran(f_{x \cup y} \restriction y)$ are disjoint, and $|A \triangle \ran(f_x) | < \kappa$, and $|B \triangle \ran(f_y) | < \kappa$.  $\square$
\end{proof}

A positive answer in the following special case is well-known (see \cite{kunen}, Chapter II, Theorem 5.9 and exercise 37):

\begin{theorem}Let $\kappa$ be a regular cardinal.  There is a $\kappa$-coherent collection of functions $\{ f_\alpha : \alpha < \kappa^+ \}$, such that each $f_\alpha$ is an injection from $\alpha$ to $\kappa$.
\end{theorem}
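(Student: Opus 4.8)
The plan is to construct the sequence by transfinite recursion on $\alpha < \kappa^+$, maintaining at each stage that $f_\alpha : \alpha \to \kappa$ is injective, that $\{ f_\beta : \beta \leq \alpha \}$ is $\kappa$-coherent, and, as an auxiliary invariant, that $\kappa \setminus \ran(f_\alpha)$ has size $\kappa$. Coherence only requires $f_\alpha \restriction \beta$ to agree with $f_\beta$ off a set of size $< \! \kappa$, so the real work is threading injectivity through the recursion: this is exactly the strengthening of Lemma~\ref{ext}(b) from ``$< \! \kappa$ to $1$'' to ``$1$ to $1$''. Proposition~\ref{ad} shows such a strengthening is not free in general, but for the length $\lambda = \kappa^+$ it is not obstructed, since there do exist $\kappa^+$ almost disjoint subsets of $\kappa$.

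The successor step is routine: given $f_\gamma$, the co-$\kappa$ invariant supplies a value $v \in \kappa \setminus \ran(f_\gamma)$, and $f_{\gamma+1} = f_\gamma \cup \{ (\gamma, v) \}$ preserves all three invariants. The limit step carries the entire argument. Fix a limit $\alpha$ and an increasing cofinal sequence $\langle \gamma_i : i < \lambda \rangle$ with $\lambda = \cf(\alpha) \leq \kappa$, and build $f_\alpha$ as the union of an increasing chain $g_i = f_\alpha \restriction \gamma_i$. At stage $i+1$ I would copy $f_{\gamma_{i+1}}$ on the new interval $[\gamma_i, \gamma_{i+1})$ and then repair injectivity. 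The point is that $g_i$ and $f_{\gamma_{i+1}} \restriction \gamma_i$ agree off a set of size $< \! \kappa$ (by coherence and the induction), so the copied block can collide with $\ran(g_i)$ in fewer than $\kappa$ places; each such collision is rerouted to a fresh value, which exists because $\ran(g_i) \cup \ran(f_{\gamma_{i+1}}) \subseteq \ran(f_{\gamma_{i+1}}) \cup (\text{a set of size} < \! \kappa)$, and the co-$\kappa$ invariant for $f_{\gamma_{i+1}}$ therefore leaves $\kappa$ many values available. Coherence of $f_\alpha$ with every earlier $f_\gamma$ then follows: choosing $j$ with $\gamma < \gamma_j$, we have $f_\alpha \restriction \gamma = g_j \restriction \gamma$, which agrees off $< \! \kappa$ with $f_{\gamma_j} \restriction \gamma$, which agrees off $< \! \kappa$ with $f_\gamma$, and $\kappa$ is regular.

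The main obstacle is maintaining the co-$\kappa$ invariant for $f_\alpha$ itself, which is what allows $\alpha$ to later serve as a node $\gamma_i$ of some other cofinal sequence. Along the chain the ranges $\ran(f_{\gamma_i})$ increase modulo $< \! \kappa$, but a new block may legitimately contribute $\kappa$ many new values, so $\bigcup_i \ran(f_{\gamma_i})$ can a priori exhaust $\kappa$; one must therefore actively reserve a set of size $\kappa$ to be omitted by $f_\alpha$ while performing the repairs. Arranging this reservation is delicate precisely when $\cf(\alpha) = \kappa$: the reserved values must be chosen so that each of the $\kappa$ blocks disturbs only $< \! \kappa$ of them, and simultaneously the repairs accumulated on any fixed initial segment $\gamma < \alpha$ must total fewer than $\kappa$, so that $f_\alpha(\gamma)$ stabilizes and $f_\alpha \restriction \gamma$ agrees with $f_\gamma$ off $< \! \kappa$. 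The bound on each initial segment follows from regularity of $\kappa$ once the bookkeeping ensures that the repairs contributed at stage $i$ depend only on the $< \! \kappa$ disagreements accumulated below $i$ (so the partial sums stay below $\kappa$ for every $i < \kappa$, as $|i| < \kappa$). Reconciling these partial-sum bounds with the global demand that $\kappa$ many values remain permanently unused is the heart of the construction, and the step I expect to require the most care. $\square$
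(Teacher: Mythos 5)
You should first note that the paper does not actually prove this theorem: it is stated as a known result with a citation to Kunen (Chapter II, Theorem 5.9 and Exercise 37), so there is no internal proof to compare against, and your proposal must stand on its own. Your overall strategy (recursion maintaining injectivity, coherence, and the co-$\kappa$ range invariant) is the standard one, and the gap you yourself flag at limits of cofinality $\kappa$ is real but is the \emph{fixable} one: rather than fixing a size-$\kappa$ reserved set in advance, grow it along the chain, adding one new point $\rho_i \notin \ran(h_i) \cup x_i$ at each step to form $x_{i+1}$. Since each block $f_{\gamma_{i+1}} \restriction [\gamma_i,\gamma_{i+1})$ is injective, it meets the \emph{current} reserved set $x_{i+1}$ (of size $<\kappa$) in fewer than $\kappa$ points, so rerouting those hits together with the collisions stays within the $<\kappa$-per-stage coherence budget, and after $\kappa$ steps the union $\bigcup_i x_i$ has size $\kappa$ and is omitted by $f_\alpha$. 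So the step you call the heart of the construction can be completed by a standard device.

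The genuine flaw is in the case you wave through as routine: limits with $\cf(\alpha) = \lambda < \kappa$, which exist exactly when $\kappa$ is uncountable. There, coherence \emph{forces} $\ran(f_\alpha) \supseteq \bigcup_{i<\lambda} \ran(f_{\gamma_i})$ up to a set of size $<\kappa$ (each $\ran(f_{\gamma_i}) \subseteq f_\alpha[\gamma_i] \cup f_{\gamma_i}[D_i]$ with $|D_i|<\kappa$, and there are only $\lambda < \kappa$ correction sets), while a union of $<\kappa$ many co-$\kappa$ sets, even one increasing modulo $<\kappa$, can be all of $\kappa$. Your invariants do not exclude this. Concretely, for $\kappa = \omega_1$: partition the odd ordinals into uncountable pieces $\langle O_m : m < \omega \rangle$, let $E_n$ be the evens together with $O_0 \cup \dots \cup O_{n-1}$, fix a bijection $b : \omega_1 \cdot \omega \to \omega_1$ with $b[\omega_1\cdot(n+1)] = E_n$, and set $f_\beta = b \restriction \beta$ for $\beta < \omega_1 \cdot \omega$. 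This family is injective, literally nested (hence coherent), and every range is co-$\omega_1$, so it satisfies every invariant you carry; yet any injection on $\omega_1 \cdot \omega$ cohering with all of its members has co-countable range. Thus your inductive hypothesis is provably too weak to pass the stage $\alpha = \omega_1\cdot\omega$, and no reservation performed \emph{at} stage $\alpha$ can repair it, because the damage was already done at earlier stages. A correct proof must either carry a stronger invariant that controls unions of ranges along all short cofinal sequences (this is in effect what Kunen's cited construction does), or sidestep range bookkeeping entirely: for instance, first build a coherent family of $<\!\kappa$ to $1$ functions by the recursion of Lemma~\ref{ext}(b), which needs no range invariant, and then pass to injections by setting $e_\alpha(\xi) = \pi\bigl(f_\alpha(\xi), \ot\{\eta < \xi : f_\alpha(\eta) = f_\alpha(\xi)\}\bigr)$ for a pairing bijection $\pi : \kappa \times \kappa \to \kappa$; the $<\!\kappa$ to $1$ property makes each $e_\alpha$ injective, and regularity of $\kappa$ shows the family remains coherent.
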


A more general positive answer can be forced:

\begin{theorem}
\label{injections}
Assume $\kappa$ is a regular cardinal with $2^{<\kappa}=\kappa$, and $\lambda \geq \kappa$.  There is a $\kappa$-closed, $\kappa^+$-c.c. partial order that adds a coherent $(\kappa^+,\lambda,\kappa)$-forest of injective functions.
\end{theorem}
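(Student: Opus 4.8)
The plan is to force the forest in the reduced form of a single \emph{coherent system of total injections}, and then read off the full forest. First I fix once and for all a set $E\subseteq\kappa$ with $|E|=|\kappa\setminus E|=\kappa$, to serve as a reservoir, and I force a family $\langle g_z : z\in([\lambda]^{\le\kappa})^V\rangle$ such that each $g_z\colon z\to E$ is injective and the family is pairwise $\kappa$-coherent. Granting such a family in $V[G]$, I define the forest by $F=\{f : \dom(f)\in[\lambda]^{\le\kappa},\ f\text{ injective},\ |\{x: f(x)\ne g_{\overline{\dom(f)}}(x)\}|<\kappa\}$, where $\overline z\in V$ is a cover of $z$ of size $\le\kappa$ furnished by the $\kappa^+$-c.c. proved below (and $g_z:=g_{\overline z}\restriction z$ when $z\notin V$). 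Verifying that $F$ is a coherent $(\kappa^+,\lambda,\kappa)$-forest of injections is then routine: coherence of $F$ is immediate from coherence of the $g_z$; the restriction half of clause (3) uses that $z_0\subseteq z_1$ implies $g_{z_0}=^* g_{z_1}\restriction z_0$; and the extension half is the one place the reservoir is needed — given $f'\in F_{z_0}$ one sets $f=f'\cup(g_{z_1}\restriction(z_1\setminus z_0))$, observes that it differs from $g_{z_1}$ on $<\kappa$ points and hence produces $<\kappa$ collisions, and repairs these by reassigning the offending points into $\kappa\setminus E$, which is possible since only $<\kappa$ values are forbidden and $|\kappa\setminus E|=\kappa$. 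This mirrors the injectivity bookkeeping already used in Lemma~\ref{ext} and Lemma~\ref{agree}.

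So the problem reduces to forcing the coherent system, and here I use small approximations. A condition $p$ is a family $\langle g^p_z : z\in a_p\rangle$, where $a_p\in[([\lambda]^{\le\kappa})^V]^{<\kappa}$ and each $g^p_z$ is an injection from some $D^p_z\in[z]^{<\kappa}$ into $E$. No coherence clause is imposed on a single condition: since every decided domain has size $<\kappa$, any two of the $g^p_z$ automatically agree off a set of size $<\kappa$. Coherence of the \emph{generic} family is instead built into the ordering: $q\le p$ holds when $a_q\supseteq a_p$, each $g^q_z\supseteq g^p_z$ for $z\in a_p$, and, for all $z,z'\in a_p$, every point of $(D^q_z\cap D^q_{z'})\setminus(D^p_z\cap D^p_{z'})$ — a point that becomes common to $z$ and $z'$ only at the later stage — receives equal values under $g^q_z$ and $g^q_{z'}$. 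In other words, once two indices are simultaneously active their disagreements are frozen; since the disagreements present when the two first become jointly active lie in a set of size $<\kappa$, the final functions $g_z,g_{z'}$ disagree on $<\kappa$ points, which is exactly $\kappa$-coherence.

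The $\kappa$-closure is straightforward: the union of a descending $\delta$-sequence with $\delta<\kappa$ has index set and decided domains of size $<\kappa$ by regularity of $\kappa$, the injections remain injective, and the freezing clause is preserved. The substance of the construction lies in the density/extension lemma: given $p$, a new index $z$, and a point $\xi\in z$, there is $q\le p$ with $z\in a_q$ and $\xi\in D^q_z$. To add $z$ one must choose values $g^q_z(x)$ on a small extension of the decided region so that $g^q_z$ is injective and agrees, off the frozen sets, with every currently active $g^q_{z'}$; where agreement is forced one copies the common value, and where injectivity would fail one reassigns into the reservoir $\kappa\setminus E$, preserving the forced agreements by changing the shared value simultaneously for all indices constrained to it. Because only $<\kappa$ points and $<\kappa$ indices are involved, and $|\kappa\setminus E|=\kappa$, this can always be carried out; meeting these dense sets along the generic yields total injections $g_z\colon z\to E$ forming a pairwise $\kappa$-coherent family, as required.

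The hard part will be the $\kappa^+$-chain condition. The natural approach is a $\Delta$-system argument: given $\kappa^+$ conditions, thin so that their supports $s_p=\bigcup_{z\in a_p}D^p_z$ (each of size $<\kappa$) form a $\Delta$-system with root $R$, and then thin again, using $2^{<\kappa}=\kappa$ and hence $\kappa^{<\kappa}=\kappa$, so that all conditions share the same ``root type'' — the same decided points of $R$, the same values there, and the same incidence pattern of indices to those points. The delicate point, and the one I expect to cause the most trouble, is that the indices $z$ are sets of size up to $\kappa$, strictly larger than $R$, so the root type cannot literally record the big-set identities of the indices; one must match the coherence data of indices \emph{shared} between two conditions on $R$, while remaining free to treat all other index pairs as meeting for the first time. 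Once this matching is arranged, two conditions $p,q$ of a common root type should amalgamate: take the union of their families, and for an index pair $(z,z')$ with $z\in a_p$, $z'\in a_q$ not already linked, note that any point common to $D^p_z$ and $D^q_{z'}$ lies in $R$, so the $<\kappa$ disagreements there are simply frozen, exactly as the ordering permits. Establishing rigorously that the root type can be chosen so as to force consistency of shared indices — so that the union is genuinely a condition below both — is the crux of the argument; everything else is the routine bookkeeping sketched above.
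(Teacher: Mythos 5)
Your overall strategy is the same as the paper's: force with $<\kappa$-sized families of partial injections indexed by ground-model sets $z \in ([\lambda]^{\leq\kappa})^V$, build coherence of the generic family into the ordering by ``freezing'' disagreements once two indices are jointly active, and then read off the forest using the $\kappa^+$-c.c.\ to keep $([\lambda]^{\leq\kappa})^V$ cofinal. But your freezing clause is the wrong one, and the error is fatal rather than cosmetic. You freeze every point of $(D^q_z\cap D^q_{z'})\setminus(D^p_z\cap D^p_{z'})$, i.e.\ every point outside the \emph{intersection} of the decided domains. This includes points already decided for $z$ but not for $z'$, so your ordering demands that when such a point $\alpha$ is later decided for $z'$, its value be \emph{copied} from $g^p_z(\alpha)$. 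That value may already lie in $\ran(g^p_{z'})$, and then $\alpha$ can never enter $D_{z'}$: any $s\le p$ with $\alpha\in D^s_{z'}$ would have $g^s_{z'}(\alpha)=g^s_z(\alpha)\in\ran(g^p_{z'})$, destroying injectivity of $g^s_{z'}$. Such configurations are reachable by legal extensions (decide $g_z(\alpha)=v$ with $\alpha\notin D_{z'}$, then decide $g_{z'}(\beta)=v$ at some $\beta\neq\alpha$; your freeze sets are empty at both steps), and blocking any given undecided point of $z'$ is in fact \emph{dense}: below any $q$, adjoin a fresh index $z\ni\alpha$ with $g_z(\alpha)$ set to a value already in $\ran(g^q_{z'})$ --- legal, since the freeze clause only constrains pairs already in $a_q$. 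Hence your extension lemma (``given $p$, $z$, and $\xi\in z$ there is $q\le p$ with $\xi\in D^q_z$'') is false, and generically the $g_z$ are undefined at $\kappa$-many points, so no forest can be read off. Your proposed repair --- ``reassigning into the reservoir $\kappa\setminus E$'' --- cannot work for two reasons: your own conditions are required to take values in $E$, and, more fundamentally, a forcing extension can only add values, never change the already-decided value that causes the conflict.

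The correct clause, which is the paper's, freezes only points undecided in \emph{both} functions (outside the union $\dom(q(z_0))\cup\dom(q(z_1))$, not the intersection): such a point, once decided for one index, must be decided simultaneously for the other with the same value. Then deciding a point is dense --- pick a single fresh value avoiding the $<\kappa$ values used so far and assign it simultaneously to all jointly active indices containing the point; no copying of old values is ever required --- so the generic functions are total injections, and coherence still holds because the disagreements of $g_{z_0}$ and $g_{z_1}$ are confined to $\dom(p(z_0))\cup\dom(p(z_1))$ for the first $p\in G$ containing both indices, a set of size $<\kappa$. Separately, your $\kappa^+$-c.c.\ sketch is incomplete by your own admission, and the delta-system on point-supports $s_p$ creates exactly the index-matching problem you worry about. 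The clean route is to apply the delta-system lemma (using $\kappa^{<\kappa}=\kappa$) to the index families $a_p$ themselves, so that any index shared by two conditions lies in a root of size $<\kappa$, and then thin so that all conditions carry identical functions on the root indices; the union of two such conditions is a common extension, the freeze clause being vacuous since the union decides no new points. (Your reservoir $E$ is a genuinely nice touch for the step the paper glosses over --- repairing injectivity collisions when verifying the extension clause of the forest in $V[G]$ --- but it does not rescue the forcing itself.)
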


\begin{remark}Such a forest will be Aronszajn because a chain of length $\kappa^+$ would give an injection from $\kappa^+$ to $\kappa$.  Unlike the forests of Theorem~\ref{kos}, it will never have the $\lambda$ chain condition.
\end{remark}

\begin{proof}
Let $\mathbb{P}$ be the collection partial functions $p$ that assign to $<\kappa$ many $z \subseteq \lambda$ of size $\leq \kappa$, a partial injective function from $z$ to $\kappa$ defined at $<\kappa$ many points.  Let $p \leq q$ when:
\begin{enumerate}[(a)]
\item $\dom(p) \supseteq \dom(q)$.
\item For all $z \in \dom(q)$, $p(z) \supseteq q(z)$.
\item If $z_0,z_1 \in \dom(q)$, $\alpha \in z_0 \cap z_1 \setminus (\dom(q(z_0)) \cup \dom(q(z_1))$, and $\alpha \in \dom(p(z_0))$, then $\alpha \in \dom (p(z_1))$ and $p(z_0)(\alpha) = p(z_1)(\alpha)$.
\end{enumerate}

It is easy to check that $\leq$ is transitive and that $\langle \mathbb{P},\leq \rangle$ is $\kappa$-closed.  To check the chain condition, let $A \subseteq \mathbb{P}$ have size $\kappa^+$.  Since $\kappa^{<\kappa}=\kappa$, we can find a $B \subseteq A$ of size $\kappa^+$ such that $\{ \dom(p) : p \in B \}$ forms a delta-system with root $R$.  Again since $\kappa^{<\kappa}=\kappa$, there is a $C \subseteq B$ of size $\kappa^+$ and a collection of functions $\{ f_z : z \in R \}$ such that $\forall p \in C$, $\forall z \in R$, $p(z) = f_z$.  If $p,q \in C$, then $p \cup q$ is a common extension.

If $G \subseteq \mathbb{P}$ is generic, then for all $z \in \mathcal{P}_{\kappa^+}(\lambda)^V$, $G$ gives an injective function $f_z: z \to \kappa$ as $\bigcup \{ p(z) : z \in p \in G \}$.  For $z_0, z_1 \in \mathcal{P}_{\kappa^+}(\lambda)^V$, there is some $p \in G$ such that $z_0,z_1 \in \dom(p)$.  $p$ forces that $f_{z_0}$ and $f_{z_1}$ agree outside $\dom(p(z_0)) \cup \dom(p(z_1))$.  Finally, by the $\kappa^+$-c.c., $\mathcal{P}_{\kappa^+}(\lambda)^V$ is cofinal in $\mathcal{P}_{\kappa^+}(\lambda)^{V[G]}$.  So we can define a $(\kappa^+,\lambda,\kappa)$-forest $F$ as $\{ f : f$ is an injection into $\kappa$, $(\exists z) \dom(f) \subseteq z \in \mathcal{P}_{\kappa^+}(\lambda)^V$, and $f$ disagrees with $f_z$ at $< \kappa$ many points$\}$.  $\square$
\end{proof}

\section{Influence of the P-ideal dichotomy}

In the previous section, we saw that coherent, Aronszajn $(\omega_1,\omega_n,\omega)$-forests can be constructed in ZFC for every natural number $n$.  Here we show that the third coordinate is optimal, in the sense that for $n < \omega$ and $\lambda \geq \omega_1$, ZFC cannot prove the existence of a coherent, Aronszajn $(\omega_1,\lambda,n)$-forest.  Let us recall the relevant notions:

\begin{definition}An ideal $I \subseteq \mathcal{P}(X)$ is a \emph{P-ideal} when $\mathcal{P}_\omega(X) \subseteq I \subseteq \mathcal{P}_{\omega_1}(X)$, and for any $\{ z_n : n < \omega \} \subseteq I$, there is $z \in I$ such that $z_n \setminus z$ is finite for all $n$.
\end{definition}

\begin{definition}The \emph{P-ideal dichotomy (PID)} is the statement that for any P-ideal $I$ on a set $X$, either
\begin{enumerate}[(1)]
\item there is an uncountable $Y \subseteq X$ such that $\mathcal{P}_{\omega_1}(Y) \subseteq I$, or
\item there is a partition of $X$ into $\{ X_n : n < \omega \}$ such that for all $n$ and all $z \in I$, $z \cap X_n$ is finite.
\end{enumerate}
\end{definition}

PID is a consequence of the Proper Forcing Axiom, and is also known to be consistent with ZFC+GCH relative to a supercompact cardinal \cite{pideal}. The restriction of PID to ideals on sets of size $\omega_1$ is known to be consistent without the use of large cardinals, both with and without GCH \cite{smallpideal}.

Using a coherent, Aronszajn $(\omega_1,\omega_1,\omega)$-forest $F$, we can obtain a coherent, Aronszajn $\omega_1$-tree $T$ of binary functions by taking the collection of characteristic functions of members of $F$ whose domain is an ordinal, considering the functions as subsets of $\alpha \times \omega$ for $\alpha < \omega_1$.  A cofinal branch would be a function $g : \omega_1 \times \omega \to 2$ with $g \restriction (\alpha \times \omega) \in T$ for all $\alpha < \omega_1$, and this would code an uncountable well-ordered chain in $F$.  Further, using a regressive function argument, we can see that the closure of $T$ under finite modifications remains Aronszajn.  On the other hand, forests are more flexible.  If we take such a tree $T$, close it under subsets to get a forest $F$, then it may be that there is an uncountable well-ordered chain $C \subseteq F$, but with $\dom( \bigcup C)$ a proper subset of $\omega_1 \times \omega$.  This is what happens under PID.

\begin{theorem}Assume PID, and let $F$ be a coherent $(\omega_1,\lambda,n)$-forest closed under finite modifications, for some $\lambda \geq \omega_1$, $n<\omega$.  Then $F$ is not Aronszajn.
\end{theorem}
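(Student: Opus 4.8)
The plan is to run a P-ideal dichotomy argument in which one alternative manufactures the desired chain and the other is impossible because the range is finite. First I would put the problem in a convenient form. Since any strictly increasing $\subseteq$-chain in the restriction of $F$ to domains contained in a fixed $W\subseteq\lambda$ of size $\omega_1$ is already a chain in $F$, and this restriction is again a coherent $(\omega_1,W,n)$-forest closed under finite modifications, I may assume $\lambda=\omega_1$. Fixing one member of each $F_\alpha$ with domain $\alpha$ yields a coherent sequence $\langle f_\alpha:\alpha<\omega_1\rangle$ with $f_\alpha:\alpha\to n$; coherence means $f_\beta\restriction\alpha$ and $f_\alpha$ differ on a finite set when $\alpha<\beta$. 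Being Aronszajn means $F$ has no strictly increasing $\subseteq$-chain of order type $\omega_1$. The flexibility of forests over trees is that such a chain may live on a partial domain $Y\subsetneq\omega_1$, and this is exactly what I will produce: an uncountable $Y$ and a $g:Y\to n$ with $g\restriction z=^{*}f_z$ for all countable $z\subseteq Y$, so that the initial segments of $Y$ give the chain.

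Next I would define a P-ideal $I$ on $\omega_1$ isolating the countable sets along which the $f_\alpha$'s \emph{converge uniformly}: roughly, $a\in I$ when there are a single $g_a:a\to n$ and a finite $F_a\subseteq a$ with the disagreement of $f_\alpha\restriction(a\cap\alpha)$ from $g_a$ confined to $F_a$ for all $\alpha$. The precise formulation must be tuned so that $I$ is downward closed, contains the finite sets, and (most delicately) has the P-property; the pseudo-union of countably many members should be built by gluing the local limits $g_{a_k}$, which cohere modulo finite, and then using coherence to keep the exceptional set finite. Granting that $I$ is a P-ideal, I apply PID. If alternative (1) gives an uncountable $Y$ with $\mathcal{P}_{\omega_1}(Y)\subseteq I$, I build a coherent limit $g:Y\to n$ by recursion along an increasing continuous chain $Y=\bigcup_\alpha Y_\alpha$ of countable sets: at successors one copies $f_{Y_{\alpha+1}}$ on the new points, and the crucial point is that at limit stages of cofinality $\omega$ the accumulated disagreement between $g$ and $f_{Y_\alpha}$ stays finite \emph{precisely because} $Y_\alpha\in I$. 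Then each $g\restriction Y_\alpha=^{*}f_{Y_\alpha}$ lies in $F$ by closure under finite modifications, and these form a strictly increasing $\subseteq$-chain of length $\omega_1$, so $F$ is not Aronszajn.

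The remaining task is to rule out alternative (2), and this is where the finiteness of $n$ is essential. Suppose $\omega_1=\bigsqcup_{k<\omega}X_k$ with each $X_k$ meeting every member of $I$ finitely. Some $X_k$ is uncountable, and I would derive a contradiction by extracting an infinite (indeed $I$-) subset of $X_k$ along which $\langle f_\alpha\rangle$ converges uniformly, contradicting $I$-thinness. The restrictions $f_\alpha\restriction a$ live in the compact space $n^{a}$, so compactness and a Ramsey-type thinning should stabilize the values on an infinite subset. I expect this extraction, together with the verification of the P-property for $I$, to be the main obstacle: balancing the ``for all $\alpha$'' uniformity against a thinning that survives in a single $X_k$ is the delicate heart of the argument. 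It is also exactly the step that collapses when the range is infinite, since $\omega^{a}$ is not compact, which matches the claimed optimality of the third coordinate.
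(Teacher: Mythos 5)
Your argument breaks at the choice of ideal, and consequently at the treatment of alternative (2). Unwind your definition: if $a \in I$ is witnessed by $g_a$ and the finite set $F_a$, then for every $y \in a \setminus F_a$ and every $\alpha > y$ we must have $f_\alpha(y) = g_a(y)$; that is, all the $f_\alpha$ with $\alpha > y$ take one and the same value at $y$. Hence, setting $E = \{ y < \omega_1 : f_\alpha(y) = f_\beta(y) \text{ for all } \alpha, \beta > y \}$, your ideal is exactly $\{ a : a \setminus E \text{ is finite} \}$. For an ideal of this form PID is vacuous: alternative (1) holds iff $E$ is uncountable, and alternative (2) holds iff $E$ is countable, a dichotomy ZFC already decides; so all the strength of the hypothesis is being discarded. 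Worse, your plan to refute alternative (2) -- extracting an infinite $I$-set inside an uncountable $X_k$ -- amounts to claiming $\omega_1 \setminus E$ is countable, which is false in general: take $F$ to be the (non-Aronszajn) forest of all countable almost-zero functions into $2$, and choose as representative $f_\alpha$ the zero function on $\alpha$ with its value flipped at the predecessor of $\alpha$ when $\alpha$ is a successor. This is a legitimate coherent choice of representatives, yet $E = \emptyset$, so $I$ is the ideal of finite sets and alternative (2) holds outright; no contradiction is available, and none should be, since the forest fails to be Aronszajn for reasons (the chain of zero functions) that your ideal is blind to. Compactness of $n^a$ cannot rescue the extraction: it produces pointwise limits of subsequences, not the uniform, single-finite-exceptional-set agreement over all $\alpha$ that membership in your $I$ demands. (Reading ``for all $\alpha$'' as ``for all sufficiently large $\alpha$'' only replaces $E$ by the set of eventually stabilizing points; the same structural objection applies, and you would then need to prove that this set is uncountable for every coherent sequence under PID, which is the whole problem over again.)

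The repair, and it is what the paper does, is to pick an ideal for which \emph{both} alternatives of PID manufacture a chain, rather than trying to refute one of them. For $n = 2$, let $I = \{ z : z \subseteq \{ x : f(x) = 1 \} \text{ for some } f \in F \}$. Coherence plus closure under finite modifications makes this a P-ideal: the $1$-set of any $f \in F$ whose domain is $\bigcup_k \dom(f_k)$ is a pseudo-union of the $z_k$. Under alternative (1), the constant-$1$ functions on initial segments of $Y$ lie in $F$ (restrict a witness), giving a chain; under alternative (2), some piece $X_k$ is uncountable, every $f \in F$ with $\dom(f) \subseteq X_k$ has finite $1$-set, so the constant-$0$ functions on countable subsets of $X_k$ lie in $F$, again a chain. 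General finite $n$ is then handled by induction rather than compactness: compose $F$ with the retraction $r$ sending $n$ to $1$ and everything below $n$ to $0$; applying the two-valued case to $\{ r \circ f : f \in F \}$ yields an uncountable $Y$ carrying either a constant-$n$ chain of $F$ or an $n$-valued coherent subforest of $F$ on $Y$, to which the inductive hypothesis applies. Your preliminary reductions (to $\lambda = \omega_1$, and recasting non-Aronszajn as finding $g : Y \to n$ all of whose countable restrictions lie in $F$) are fine; the gap is located entirely in the ideal and in the intent to derive a contradiction from alternative (2).
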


\begin{proof}
First we prove this for $n = 2$.  Let $F$ be a coherent $(\omega_1,\lambda,2)$-forest closed under finite modifications.  Let $I$ be the collection of $z \subseteq \lambda$ such that for some $f \in F$, $z \subseteq \{ \alpha : f(\alpha) = 1 \}$.

We claim $I$ is a P-ideal.  Let $\{ z_n : n < \omega \} \subseteq I$, and for each $n$, choose $f_n \in F$ witnessing $z_n \in I$.  Let $f \in F$ have domain $\bigcup_n \dom(f_n)$, and let $z = \{ \alpha : f(\alpha) = 1 \}$.  For any $n$, $f$ disagrees with $f_n$ on a finite set, so there can only be finitely many $\alpha \in z_n \setminus z$.

Assume that alternative (1) of PID holds, and let $Y \subseteq \lambda$ be uncountable such that $\mathcal{P}_{\omega_1}(Y) \subseteq I$.  Enumerate $Y$ as $\langle y_\alpha : \alpha < \omega_1 \rangle$.  For each $\alpha < \omega_1$, let $f_\alpha$ be the function that has $f_\alpha(y_\beta) = 1$ for $\beta <\alpha$, and is undefined elsewhere.  Since $F$ is closed under subsets, each $f_\alpha \in F$, and these form an uncountable well-ordered chain.

Assume alternative (2) of PID holds.  Let $X_n \subseteq \lambda$ be uncountable such that for all $z \in I$, $X_n \cap z$ is finite.  Let $g$ have constant value 0 on $X_n$.  If $f \in F$ and $\dom(f) \subseteq X_n$, then $\{ \alpha : f(\alpha) = 1 \}$ is finite.  Thus for any countable $z \subseteq X_n$, $g \restriction z \in F$, so again we have an uncountable well-ordered chain.

Now assume the result holds for $n$, and let $F$ be a coherent $(\omega_1,\lambda,n+1)$-forest.  Let $r(k) = 0$ for $k < n$, and $r(n) = 1$.  Consider the forest $G = \{ r \circ f : f \in F \}$, and let $g_0, g_1$ be the functions on $\lambda$ with constant value 0 and 1 respectively.  By the above argument, there is some uncountable $Y \subseteq \lambda$ such that either $g_0 \restriction z \in G$ for all countable $z \subseteq Y$, or likewise for $g_1$.  The latter case shows that $F$ is not Aronszajn.  In the former case, we have that for all countable $z \subseteq Y$, there is a function $f_z \in F$ with domain $z$ that only takes values below $n$.  If $H = \{ g : (\exists z \in \mathcal{P}_{\omega_1}(Y)) g : z \to n$ and $\{ \alpha : g(\alpha) \not= f_z(\alpha) \}$ is finite$\}$, then $H$ is a coherent $(\omega_1,Y,n)$-forest contained in $F$.  By induction, $H$ contains an uncountable well-ordered chain.  $\square$
\end{proof}

\section{Suslin forests}

\begin{lemma}
Let $\kappa$ be a regular cardinal.  All Suslin $(\kappa,\lambda,\mu)$-forests are $\kappa$-distributive.
\end{lemma}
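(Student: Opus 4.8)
The plan is to treat $F$ as a forcing poset ordered by reverse inclusion, so that $f$ is stronger than $g$ exactly when $f \supseteq g$, and to read $\kappa$-distributivity as the assertion that the intersection of any family $\langle D_i : i < \delta \rangle$ of $<\kappa$ many dense open subsets of $F$ is dense. Here \emph{open} means closed under extension: if $a \in D_i$ and $g \supseteq a$, then $g \in D_i$. Fixing an arbitrary $f_0 \in F$, I will produce a single $g \supseteq f_0$ lying in every $D_i$, which suffices.

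For each $i < \delta$, let $A_i$ be a maximal antichain, among the conditions extending $f_0$, all of whose members lie in $D_i$; such an $A_i$ exists by Zorn's lemma, and since $D_i$ is dense below $f_0$, $A_i$ is in fact a maximal antichain in $\{ f \in F : f \supseteq f_0 \}$ (any $g \supseteq f_0$ incompatible with all of $A_i$ could be extended into $D_i$, contradicting maximality). Because $F$ is Suslin, $|A_i| < \kappa$. The crucial point is the following absorption fact: if $g \supseteq f_0$ satisfies $\dom(g) \supseteq \dom(a)$ for some $a \in A_i$ compatible with $g$, then letting $h \in F$ witness compatibility ($h \supseteq g$ and $h \supseteq a$), both $a$ and $g \restriction \dom(a)$ equal $h \restriction \dom(a)$, so $a \subseteq g$; by openness $g \in D_i$.

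Set $z_i = \bigcup_{a \in A_i} \dom(a)$. Since $|A_i| < \kappa$, each $\dom(a)$ has size $<\kappa$, and $\kappa$ is regular, we get $z_i \in \mathcal{P}_\kappa(X)$. Let $z = \dom(f_0) \cup \bigcup_{i<\delta} z_i$; again by regularity and $\delta < \kappa$, $z \in \mathcal{P}_\kappa(X)$. By clause (3) of the definition of forests there is $g \in F_z$ with $g \restriction \dom(f_0) = f_0$, that is, $g \supseteq f_0$ and $\dom(g) = z$. For each $i$, $g$ extends $f_0$ and so is compatible with some member $a$ of $A_i$ by maximality, while $\dom(g) = z \supseteq z_i \supseteq \dom(a)$; the absorption fact then gives $g \in D_i$. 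Hence $g \in \bigcap_{i<\delta} D_i$, establishing that this intersection is dense below $f_0$, and since $f_0$ was arbitrary, that it is dense.

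The main obstacle to watch is the absorption step, which is where the combinatorics of forests (rather than trees) enters: it is precisely the fact that a condition whose domain contains $\dom(a)$ cannot be merely compatible with $a$ without outright extending it, since any common extension must agree with both on $\dom(a)$. In contrast to Lemma~\ref{agree}, no coherence or closure hypothesis is required here — the argument uses only the Suslin property to bound $|A_i|$, the regularity of $\kappa$ to keep $z$ in $\mathcal{P}_\kappa(X)$, and clause (3) to realize $z$ as the domain of an extension of $f_0$.
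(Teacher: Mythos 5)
Your proof is correct and is essentially the paper's argument: both rest on the Suslin property plus the regularity of $\kappa$ to make the union $z$ of the antichains' domains an element of $\mathcal{P}_\kappa(X)$, and on the same absorption observation that a condition whose domain contains $\dom(a)$ and is compatible with $a$ must outright extend $a$. The only difference is bookkeeping: the paper phrases $\kappa$-distributivity as common refinement of maximal antichains and observes that $F_z$ itself refines each given antichain, whereas you use the dense-open formulation, localize below an arbitrary $f_0$, and extract the antichains by Zorn's lemma.
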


\begin{proof}
Let $F$ be a Suslin $(\kappa,\lambda,\mu)$-forest, and let $\langle A_{\alpha} : \alpha < \delta < \kappa \rangle$ be a sequence of maximal antichains contained in $F$.  By the Suslin property, each $A_{\alpha}$ has size $< \! \kappa$, so if $z = \bigcup_{\alpha} \{ \dom(f) : f \in A_{\alpha} \}$, $|z| < \kappa$.  By maximality, for every $\alpha< \delta$ and every $g \in F_{z}$, there is an $f \in A_{\alpha}$ such that $g$ is compatible with $f$.  But since $\dom(f) \subseteq \dom(g)$, this means $f \subseteq g$.  Thus $F_{z}$ refines each $A_{\alpha}$.  $\square$
\end{proof}

The boolean completion of a Suslin $(\kappa,\lambda,\mu)$-forest is a $\kappa$-Suslin algebra, which is a complete boolean algebra with that is both $\kappa$-c.c. and $\kappa$-distributive.  The cardinality of this algebra is at least $\lambda$.  Therefore the existence of varieties Suslin forests is constrained by the following (see \cite{jech3}, Theorem 30.20):

\begin{theorem}[Solovay]
If $\mathbb{B}$ is a $\kappa$-Suslin algebra, then $|\mathbb{B} | \leq 2^\kappa$.
\end{theorem}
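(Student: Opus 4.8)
The plan is to exhibit a dense subset of $\mathbb{B}$ of size at most $\kappa$ and then count the elements of $\mathbb{B}$ against the subsets of this dense set. Assume $\kappa$ is regular. I would build by recursion a $\subseteq$-refining sequence of maximal antichains $\langle W_\alpha : \alpha < \kappa \rangle$, starting with $W_0 = \{1\}$. At successor steps, form $W_{\alpha+1}$ by replacing each $b \in W_\alpha$ that is not an atom by a maximal antichain of size $\geq 2$ below $b$ (and leaving atoms in place), so that $W_{\alpha+1}$ refines $W_\alpha$. At a limit $\gamma < \kappa$ I would invoke $\kappa$-distributivity: since $\gamma < \kappa$, the nonzero meets $\bigwedge_{\alpha < \gamma} b_\alpha$ taken along the branches of the tree built so far form a maximal antichain $W_\gamma$ refining all earlier $W_\alpha$. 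The $\kappa$-chain condition guarantees $|W_\alpha| < \kappa$ at every stage, so since $\kappa$ is regular the tree $T = \bigcup_{\alpha < \kappa} W_\alpha$, ordered by reverse $\mathbb{B}$-order, has at most $\kappa$ nodes.

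Granting for the moment that $T$ is dense in $\mathbb{B}$, the cardinality bound follows quickly. For each $b \in \mathbb{B}$, density gives $b = \bigvee \{ t \in T : t \leq b \}$, and the $\subseteq$-maximal such $t$ form an antichain $A_b \subseteq T$ with $\bigvee A_b = b$. Hence $b \mapsto A_b$ is an injection of $\mathbb{B}$ into $\mathcal{P}(T)$, and $|\mathbb{B}| \leq 2^{|T|} \leq 2^\kappa$.

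The main obstacle is proving that $T$ is dense, equivalently that $T$ generates $\mathbb{B}$ as a complete subalgebra. I would argue by contradiction: suppose some $p > 0$ has no $t \in T$ below it. For each $\alpha$, let $S_\alpha = \{ b \in W_\alpha : b \wedge p > 0 \}$; since $p$ lies below no element of $T$, every $b \in S_\alpha$ \emph{straddles} $p$, meaning $b \wedge p > 0$ and $b \wedge \neg p > 0$. One checks that the $S_\alpha$ form a pruned subtree $T_p$ of height $\kappa$: at successors a straddling node must have a straddling child (otherwise it would lie below $p$ or below $\neg p$), and at limits the maximality of $W_\gamma$ keeps $S_\gamma$ nonempty. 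The task is then to derive a contradiction from the existence of such a straddling tree, and this is where $\kappa$-distributivity must be brought to bear: the decreasing sequence $u_\alpha = \bigvee S_\alpha \geq p$ together with its companion $u_\alpha \wedge \neg p$ should be shown to force a refinement separating $p$ from $\neg p$ at some stage below $\kappa$, contradicting that every node straddles. Pinning down exactly how distributivity rules out the straddling tree is the delicate heart of the argument, and I would expect to spend most of the effort there; it may be cleaner to phrase the whole density step as showing that the projection of an arbitrary $p > 0$ into the complete subalgebra generated by $T$ equals $p$ itself.
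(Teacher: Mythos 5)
The paper does not actually prove this statement; it cites Jech (Theorem 30.20) for Solovay's argument, so your attempt has to stand on its own. It does not: the gap you flag at the end (density of $T$) is not merely the ``delicate heart'' of the argument, it is a false claim, and no analysis of the straddling tree can repair it. The reason is that your plan proves too much: if it worked, every $\kappa$-Suslin algebra would have a dense subset of size at most $\kappa$ (namely $T$), i.e.\ would be the completion of a $\kappa$-Suslin tree. This is contradicted by the very objects this paper studies. By the paper's forcing theorems it is consistent that there is a coherent Suslin $(\kappa,\lambda,2)$-forest $F$ with $\lambda>\kappa$, closed under finite modifications (all of the paper's constructions are), and its completion $\mathbb{B}$ is a $\kappa$-Suslin algebra. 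Suppose $D\subseteq\mathbb{B}$ were dense with $|D|\leq\kappa$. Choosing below each $d\in D$ some forest element, one gets a dense $D'\subseteq F$ with $|D'| \leq \kappa$, whose domains have union $Y$ of size $\leq\kappa<\lambda$. A generic filter $G$ is the upward closure of $G\cap D'$. Now pick $\alpha\in\lambda\setminus Y$ and let $\sigma$ be the automorphism of $F$ that flips the value at $\alpha$ of every condition whose domain contains $\alpha$: $\sigma$ fixes $D'$ pointwise, hence fixes $G\cap D'$ and therefore fixes $G$; but $G$ contains some $f$ with $\alpha\in\dom(f)$, and then $\sigma(f)\in\sigma[G]=G$ is incompatible with $f$, a contradiction. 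So these algebras have no small dense subset at all, and Solovay's bound $|\mathbb{B}|\leq 2^\kappa$ cannot be obtained by exhibiting one; the actual proof cited in Jech必 proceeds along genuinely different lines.

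The construction also fails on its own terms, even for Suslin algebras that \emph{do} contain dense Suslin trees, because you allow the successor-stage splittings to be arbitrary. Let $T_0$ be a Suslin tree consisting of a root with two isomorphic cones above incomparable nodes $a$ and $b$ (for instance two disjoint copies of one Suslin tree under a common root), let $\phi$ be an isomorphism of the cone above $a$ onto the cone above $b$, and let $\mathbb{B}$ be the completion of $T_0$. Run your recursion, but at each successor stage split a node of the form $s\vee\phi(s)$ into $\{\, s_i\vee\phi(s_i) : s_i \text{ an immediate successor of } s \text{ in } T_0\,\}$; at limits, the branch meets your recursion dictates are exactly of the form $\bigl(\bigvee_i s_i'\bigr)\vee\bigl(\bigvee_i\phi(s_i')\bigr)$, where the $s_i'$ are the continuations of the corresponding branch of $T_0$ (here one uses that $\phi$ matches continued branches with continued branches), so the pattern propagates through all $\kappa$ stages. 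Every requirement of your construction is satisfied, yet every node of the resulting tree $T$ meets both $a$ and $b=\neg a$, so no node is ever below $a$: $T$ is not dense, and indeed $a$ does not even lie in the complete subalgebra generated by $T$ (the automorphism of $\mathbb{B}$ swapping the two cones via $\phi$ fixes $T$ pointwise but moves $a$). So density is not a consequence of the construction; it is a property that can outright fail, and for the forest algebras above it must fail. Any correct proof of Solovay's theorem has to bound $|\mathbb{B}|$ without producing a $\kappa$-sized dense (or even completely generating) family.
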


Large Suslin forests can be obtained by forcing.  In \cite{jech73}, Jech defined a class of partial orders $\mathbb{P}_\lambda$ such that under CH, $\mathbb{P}_\lambda$ is countably closed, $\omega_2$-c.c., and adds a Suslin $(\omega_1,\lambda,2)$-forest.  However, this forest fails to be coherent.  Modifying his forcing slightly, we obtain:

\begin{theorem}Assume $\kappa$ is a regular cardinal, $2^{<  \kappa} = \kappa$, and $2^\kappa = \kappa^+$.  Then for all $\lambda > \kappa$, there is a $\kappa^+$-closed, $\kappa^{++}$-c.c. forcing of size $\lambda^{<\kappa}$ that adds a coherent, Suslin $(\kappa^+,\lambda,2)$-forest.
\end{theorem}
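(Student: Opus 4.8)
The plan is to force with small coherent approximations in the style of Jech, incorporating Lemma~\ref{agree} to keep the generic object coherent. First I would fix the representation: by Lemma~\ref{agree}, any coherent $(\kappa^+,X,2)$-forest closed under $<\kappa$ modifications has the feature that two functions are compatible exactly when they agree on their common domain, and such a forest is determined by a coherent ``center'' family $\langle f_z : z \in \mathcal{P}_{\kappa^+}(X)\rangle$ with $F_z$ equal to the $<\kappa$-ball around $f_z$. A condition $p$ of $\mathbb{P}$ is then a coherent $(\kappa^+, X_p, 2)$-forest closed under $<\kappa$ modifications, where $X_p \in \mathcal{P}_{\kappa^+}(\lambda)$ has size $\le\kappa$, together with Jech's \emph{sealing} datum: a designated top antichain above which extensions must sit. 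The ordering $q \le p$ demands $X_q \supseteq X_p$, that the center of $q$ coheres with and end-extends that of $p$, and that every element of the forest of $q$ whose domain reaches outside $X_p$ restricts into $F^p_{X_p}$, so that the maximal antichain $F^p_{X_p}$ (maximal by the Proposition) is frozen below $p$.

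The routine verifications use the two cardinal hypotheses. For $\kappa^+$-closure I would take the union of a decreasing $\kappa$-sequence; the union has support of size $\le \kappa$, its coherence is inherited, and Lemma~\ref{agree} guarantees the union is again a legitimate coherent forest, with $2^{<\kappa}=\kappa$ keeping each $F_z$ (hence each condition) of size $\le\kappa$. For the $\kappa^{++}$-chain condition I would apply the $\Delta$-system lemma to the supports, which is available since $\kappa^{++}$ is regular and $(\kappa^+)^{\kappa} \le (2^\kappa)^\kappa = 2^\kappa = \kappa^+$; there are at most $2^\kappa = \kappa^+$ possible center-and-sealing patterns on a fixed root of size $\le\kappa$, so among $\kappa^{++}$ conditions two agree on their root, and Lemma~\ref{agree} amalgamates them into a common lower bound. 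The size is $\lambda^{<\kappa}$ by enumerating supports from a cofinal subfamily of $\mathcal{P}_{\kappa^+}(\lambda)$ and using $2^{<\kappa}=\kappa$ to bound the data on each; clause (3) of the definition of forest lets us recover the full forest on all of $\mathcal{P}_{\kappa^+}(\lambda)$ by restriction. Genericity then supplies a center $f_z$ for every $z$, and closing under $<\kappa$ modifications yields a coherent $(\kappa^+,\lambda,2)$-forest $F$.

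The heart of the proof is Suslinity, and here I would first isolate the following consequence of the Proposition: if $A$ is a maximal antichain all of whose members have domain contained in a single $z\in\mathcal{P}_{\kappa^+}(\lambda)$, then $F_z$ refines $A$, since each $g\in F_z$ is compatible with some $a\in A$, and as $\dom(a)\subseteq z$ this forces $a\subseteq g$; hence $|A|\le|F_z|\le 2^{<\kappa}=\kappa$. Thus it suffices to force that every maximal antichain of $F$ has its domains bounded inside some single $z$. This is exactly what the sealing clause is for: given a name $\dot A$ for a maximal antichain and any condition $p$, I would use $\kappa^+$-closure to build a decreasing $\kappa$-sequence below $p$ that decides a subfamily $A_0\subseteq\dot A$ while, at each step, using the sealing clause to push the current top antichain under $A_0$. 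After $\kappa$ steps the limit condition $q$ forces $A_0$ to be predense, with $z=\bigcup\{\dom(a):a\in A_0\}$ of size $\le\kappa$ and $F_z$ refining $A_0$; since every later extension keeps new forest elements above the frozen level, $A_0$ remains maximal, so $q\Vdash \dot A = A_0$ and $|\dot A|\le\kappa$.

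The main obstacle is making the sealing clause coexist with coherence. Jech's original conditions are not coherent, and the constraint that every center be within $<\kappa$ of every other center both helps and hurts: it collapses the number of root patterns to $2^\kappa=\kappa^+$ (which is what powers the chain condition through Lemma~\ref{agree}), but it also restricts which functions may be adjoined at a sealing step, since any new center must stay close to the existing ones. The delicate point is to verify that at each of the $\kappa$ sealing steps one can simultaneously place the new top level above the elements of $\dot A$ decided so far, keep the whole approximation coherent and closed under $<\kappa$ modifications, and preserve the amalgamation property needed for the $\kappa^{++}$-chain condition. Checking that Lemma~\ref{agree} supplies the required common extensions at every such step, and that the resulting top level is genuinely predense below the limit condition rather than merely an antichain, is where the argument must be carried out with care.
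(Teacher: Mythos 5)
Your scaffolding (the reduction via Lemma~\ref{agree}, the observation that a maximal antichain whose domains all lie inside one $z$ is refined by $F_z$ and hence has size $\le\kappa$, the closure and $\Delta$-system arguments) is sound and close in spirit to the paper. But the proposal has a fatal flaw one step earlier than the ``delicate point'' you flag at the end: it lies in the ordering itself. You require that the center of $q$ \emph{end-extend} the center of $p$, i.e.\ $f_{X_q}\restriction X_p=f_{X_p}$, and your closure argument (literal unions) and chain-condition argument (exact agreement on the root, union amalgamation) confirm this reading. With end-extension, the generic filter produces a single total function $g:\lambda\to 2$, the union of the centers, and your forest $F$ is the $<\kappa$-modification closure of restrictions of $g$. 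Such a forest is \emph{never} Suslin, no matter how generic $g$ is: the functions $g\restriction\alpha$ for $\alpha<\kappa^+$ form a well-ordered chain of length $\kappa^+$ in $F$, and the functions $h_\alpha$ obtained from $g\restriction(\alpha+1)$ by flipping the value at $\alpha$ lie in $F$ and are pairwise incompatible by Lemma~\ref{agree} (for $\alpha<\beta$ one has $h_\alpha(\alpha)=1-g(\alpha)\neq g(\alpha)=h_\beta(\alpha)$), giving an antichain of size $\kappa^+$. This is precisely the ``splitting off a chain'' phenomenon from the introduction. Your sealing clause cannot block it, because for coherent conditions closed under $<\kappa$ modifications that clause is vacuous: any element of $F^q$ restricted into $X_p$ is automatically a $<\kappa$ modification of $f_{X_p}$ and hence automatically in $F^p$. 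Freezing the levels accomplishes nothing --- levels are always maximal antichains by the Proposition --- what must be sealed are arbitrary maximal antichains of the generic forest, and under end-extension that is provably impossible.

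The paper's forcing differs at exactly this point: conditions are single partial functions from $\lambda$ to $2$ of size $\le\kappa$, ordered by $f\le g$ iff $\dom(f)\supseteq\dom(g)$ and $f,g$ differ at $<\kappa$ points, so that conditions are \emph{identified} modulo $<\kappa$ modifications. This identification is what prevents any single function from threading the generic object (the generic is a coherent family with no diagonal, so no $\kappa^+$-chain appears), and it is also what powers the Suslin argument: along a $\kappa$-length decreasing sequence deciding elements $a$ of the antichain name, one may pass from the working condition to an equivalent one --- altering it at $<\kappa$ points --- so as to absorb the values of $a$ outside the current support; after $\kappa$ steps of bookkeeping, every member of the top level $F_{X_q}$ literally extends one of the decided elements, which is what makes the decided family maximal. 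In your order this modification step is illegal (a modified condition is incompatible with, not equivalent to, the original), and that is not a technical inconvenience to be ``carried out with care'' but the reason the theorem is false for the end-extension forcing. Everything else in your outline can be salvaged once the ordering is replaced by coherent extension; the sealing datum can then simply be dropped.
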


\begin{proof}[Proof (sketch)]
Let $\mathbb{P}$ be the set of all partial functions $f$ from $\lambda$ to 2 of size $\leq \kappa$, and say $f \leq g$ when $\dom(f) \supseteq \dom(g)$ and $| \{ \alpha : f(\alpha) \not= g(\alpha) \} | < \kappa$.  $\kappa^+$-closure follows from Lemma~\ref{ext}(a), and the $\kappa^{++}$-c.c. follows from a delta-system argument.  If $G$ is $\mathbb{P}$-generic over $V$, in $V[G]$ let $F = \{ f : (\exists g \in G) \dom(g) = \dom(f)$ and $| \{ \alpha : f(\alpha) \not= g(\alpha) \} | < \kappa \}$.  Clearly $F$ is coherent.  The argument that $F$ is Suslin in $V[G]$ proceeds as in \cite{jech73}. $\square$
\end{proof}

By adapting an argument of Todor\v{c}evi\'{c} that appears in \cite{todor}, we can obtain large Suslin forests in a different way:

\begin{theorem}
\label{tod}
Assume $\kappa$ is a regular cardinal, $2^{< \kappa} = \kappa$, and there is a coherent $(\kappa^+,\lambda,\kappa)$-forest of injective functions.  Then adding a Cohen subset of $\kappa$ adds a coherent, Suslin $(\kappa^+,\lambda,2)$-forest.
\end{theorem}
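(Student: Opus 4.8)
The plan is to force with the standard poset $\mathbb{C}$ for adding a Cohen subset of $\kappa$, i.e.\ the partial functions $p\colon\kappa\to 2$ with $|\dom(p)|<\kappa$ ordered by reverse inclusion; this is $<\!\kappa$-closed and, since $2^{<\kappa}=\kappa$, also $\kappa^+$-c.c.\ (hence $|\mathbb{C}|=\kappa^{<\kappa}=\kappa$ and $\mathcal P_{\kappa^+}(\lambda)^V$ stays cofinal in $V[c]$). Let $c\colon\kappa\to 2$ be generic and let $F$ be the given coherent $(\kappa^+,\lambda,\kappa)$-forest of injections, which we may assume is closed under $<\!\kappa$ modifications. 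In $V[c]$ I would set $G$ to be the closure under $<\!\kappa$ modifications of $\{c\circ f : f\in F\}$. First I would check $G$ is a coherent $(\kappa^+,\lambda,2)$-forest: the ranges lie in $2$; coherence holds because $\{x:(c\circ f)(x)\neq(c\circ g)(x)\}\subseteq\{x:f(x)\neq g(x)\}$ has size $<\!\kappa$; clause (3) and the domain condition follow from the corresponding facts for $F$ together with cofinality of $\mathcal P_{\kappa^+}(\lambda)^V$. A useful preliminary observation is that, since $|c^{-1}(0)|=|c^{-1}(1)|=\kappa$, any $<\!\kappa$ modification of $c\circ f$ can be rerouted injectively to the form $c\circ f'$ with $f'\in F$, so the full-domain elements of $G$ are exactly the $c\circ f$.

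The real work is showing $G$ is Suslin. By Lemma~\ref{agree}, two elements of $G$ are compatible iff they agree on their common domain, so an antichain is just a pairwise-disagreeing family. Suppose toward a contradiction that $\dot A$ is a $\mathbb{C}$-name and a condition forces $\dot A$ to be an antichain of size $\kappa^+$. Using the preceding observation and a covering argument for the $\kappa^+$-c.c.\ forcing, I would extract in $V$ a witnessing family of pairs $(p_\nu,f_\nu)$ with $p_\nu\Vdash c\circ\check f_\nu\in\dot A$ and the $f_\nu$ pairwise distinct; since $|\mathbb{C}|=\kappa<\kappa^+$, pigeonhole gives a single condition $p^*$ together with $\kappa^+$ distinct $f_\nu$ all forced by $p^*$. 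As each domain supports at most $\kappa$ functions of $G$, I may assume the domains $z_\nu=\dom(f_\nu)$ are pairwise distinct. It then suffices to find $\nu\neq\nu'$ and $q\le p^*$ forcing $c\circ f_\nu$ and $c\circ f_{\nu'}$ to agree on $z_\nu\cap z_{\nu'}$: such $q$ would force two distinct compatible members into $\dot A$, contradicting that $\dot A$ is an antichain.

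Now the coherence of $F$ bounds $D_{\nu\nu'}:=\{x\in z_\nu\cap z_{\nu'}:f_\nu(x)\neq f_{\nu'}(x)\}$ to size $<\!\kappa$, and $c\circ f_\nu$, $c\circ f_{\nu'}$ agree on the common domain exactly when $c$ identifies each pair $(f_\nu(x),f_{\nu'}(x))$ for $x\in D_{\nu\nu'}$. Since $\kappa$ is regular, $\dom(p^*)$ is a bounded set of size $<\!\kappa$. Setting $K_\nu=f_\nu^{-1}[\dom(p^*)]$ (of size $<\!\kappa$), I would apply the $\Delta$-system lemma (valid as $\kappa^{<\kappa}=\kappa$) to the $K_\nu$, thinning to a family of size $\kappa^+$ with common root $K^*$ on which $f_\nu\restriction K^*$ is constant and with $\ran(f_\nu)\cap\dom(p^*)$ constant. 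The effect is that for any two surviving indices and any $x\in D_{\nu\nu'}$, at most one of $f_\nu(x),f_{\nu'}(x)$ lies in $\dom(p^*)$ --- were both in it, $x$ would lie in $K^*$ and the two values would coincide, contradicting $x\in D_{\nu\nu'}$. Thus each of the $<\!\kappa$ identification requirements touches at most one value already decided by $p^*$, and using $<\!\kappa$-closure one extends $p^*$ to the desired $q$.

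The step I expect to be the main obstacle is precisely this final extension: the identifications $c(f_\nu(x))=c(f_{\nu'}(x))$ behave like edges on the $<\!\kappa$-many values involved, and because the $f_\nu$ are injections with heavily overlapping ranges (one cannot make ranges of $\kappa^+$ sets of size $\kappa$ disjoint), distinct requirements can share a value when $f_\nu(x)=f_{\nu'}(x')$ for $x\neq x'$. A single undecided value could then be forced to match two oppositely $p^*$-coloured decided values along a path, blocking a consistent extension. Guaranteeing joint realizability --- equivalently, that every connected component of this requirement graph contains at most one $p^*$-decided value --- is the crux, exactly as in Todor\v{c}evi\'c's original tree argument; I would overcome it by sharpening the $\Delta$-system bookkeeping to control how the ranges of the $f_\nu$ meet $\dom(p^*)$ (so that the decided range-values relevant to a chosen pair can be taken to be $p^*$-monochromatic), after which setting all remaining involved values to a common value yields the required $q$.
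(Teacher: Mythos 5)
Your setup coincides with the paper's: composing with the Cohen generic, closing off, the rerouting observation showing $G$ is closed under $<\!\kappa$ modifications, the appeal to Lemma~\ref{agree}, and the pigeonhole extraction of a single condition $p^*$ forcing $\kappa^+$ many distinct $f_\nu$ into the antichain are all correct and match the paper. The gap is exactly at the step you yourself flag, and your proposed repair does not close it. Your $\Delta$-system on $K_\nu=f_\nu^{-1}[\dom(p^*)]$ gives only the \emph{local} fact that each edge $\{f_\nu(x),f_{\nu'}(x)\}$, $x\in D_{\nu\nu'}$, has at most one $p^*$-decided endpoint, whereas the extension to $q$ needs the \emph{global} fact that each connected component of the requirement graph contains at most one decided vertex. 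Moreover, the fix you sketch (thin until the decided values relevant to some pair are $p^*$-monochromatic) is unattainable by thinning. After your own refinements, $f_\nu[K_\nu\setminus K^*]$ is one fixed set $R_1$ for every $\nu$. Now suppose every $f_\nu$ hits both a $0$-coloured and a $1$-coloured point of $\dom(p^*)$ outside the root, and the domains $z_\nu$ are nested and increasing (nothing in the hypotheses rules this out, and $\kappa^+$ many sets of size $\kappa$ admit no $\Delta$-system refinement). Then for every pair $\nu<\nu'$ one has $K_\nu\setminus K^*\subseteq \dom(f_{\nu'})$, and each $x\in K_\nu\setminus K^*$ automatically lies in $D_{\nu\nu'}$ (equality of values would force $x\in K^*$); hence the decided values relevant to \emph{every} pair include all of $R_1$, i.e.\ both colours. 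Monochromaticity is a property of pairs, not of single indices, so it cannot be arranged by refining the family, and with only $\kappa^+$ functions no partition relation on pairs of the needed strength is available.

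What actually closes the gap in the paper is not colour-matching but a one-sided selection of the pair. Setting $\beta_0=\sup(\dom(p_1))+1$ and $X_f=\{x: f(x)<\beta_0\}$, the paper enumerates $Z=\bigcup_{f}\dom(f)$ and each $X_f$ in the induced order, and splits on whether some column $X_f(i)$ takes $\kappa^+$ many values. In the nontrivial case it \emph{recursively chooses} the witnesses so that the new low points of each later function lie beyond the entire domain of every earlier one; after stabilizing the first $i_0$ columns to a common $S$ with common values, any chosen pair $\beta<\gamma$ satisfies $X_{f_\gamma}\cap\dom(f_\beta)=S$, so $f_\gamma(x)\geq\beta_0$ for every $x\in D$. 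Consequently a path in the requirement graph starting at a decided vertex passes only through $f_\gamma$-values afterwards, and these are never decided: every component has at most one decided vertex. The residual difficulty (assigning a value at $f_\gamma(x)$ may decide $f_\beta(x')$ for another $x'$) is then handled by the paper's $\omega$-step closure producing $q_\omega$, after which all remaining values are set to $0$. This asymmetric, recursive choice of the pair---rather than any symmetric $\Delta$-system bookkeeping---is the missing idea; without it your final extension step can genuinely fail, so as written the proposal does not prove the theorem.
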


\begin{proof}
Let $F$ be a coherent $(\kappa^+,\lambda,\kappa)$-forest of injections closed under $<\kappa$ modifications to other injections.  Let $g : \kappa \to 2$ be an $Add(\kappa)$ generic function over $V$.  Consider the family $G_0 = \{ g \circ f : f \in F \}$.  Since $Add(\kappa)$ is $\kappa^+$-c.c., $\mathcal{P}_{\kappa^+}(\lambda)^V$ is cofinal in $\mathcal{P}_{\kappa^+}(\lambda)^{V[g]}$, so $G_0$ generates a forest $G$ when we close under subsets.  $G$ inherits coherence from $F$.  We claim $G$ is Suslin.

First we note that $G$ is closed under $<\kappa$ modifications.  If $f \in F$, then by the argument for Proposition~\ref{ad}, $\kappa \setminus \ran(f)$ has size $\kappa$.  By a density argument, $\{ \alpha \in \kappa \setminus \ran(f) : g(\alpha) = i \}$ has size $\kappa$ for both $i = 0,1$.  So if $g \circ f \in G$, and $x \subseteq \dom{f}$ has size $<\kappa$, we can switch values of $g \circ f$ on $x$ by choosing distinct ordinals $\{ \alpha_i : i \in x \} \subseteq \kappa \setminus \ran(f)$ such that $g(\alpha_i) = g(f(i)) + 1 \mod 2$.  If $f^\prime = f$ except that $f^\prime(i) = \alpha_i$ for $i \in x$, then $f^\prime \in V$ by $\kappa$-closure, so $g \circ f^\prime \in G$.  So by Lemma~\ref{agree}, members of $G$ have a common extension when they agree on their common domain.

Towards a contradiction, suppose $A = \{ g \circ f_\alpha : \alpha < \kappa^+ \}$ is an antichain in $G_0$, and let $p_0 \in Add(\kappa)$ force this.  Since $|Add(\kappa)| = \kappa$, there is some $p_1 \leq p_0$ such that $p_1 \Vdash \dot{g} \circ \check{f} \in \dot{A}$ for $\kappa^+$ many $f \in F$.  Let $A_0 = \{ f : p_1 \Vdash \dot{g} \circ \check{f} \in \dot{A} \}$, and let $Z = \bigcup \{ \dom(f) : f \in A_0 \}$.

Case 1: $|Z| \leq \kappa$.  Let $h \in F$ be such that $\dom(h) = Z$.  There are at most $\kappa$ many $<\kappa$ modifications of $h$, so there are $f_0, f_1 \in A_0$ such that both agree with the same modification of $h$.  But $p_1$ forces that $g \circ f_0$ and $g \circ f_1$ are compatible, contradiction.

Case 2: $|Z| = \kappa^+$.  Let $\langle \alpha_i : i < \kappa^+ \rangle$ be an enumeration of $Z$.  Let $\beta_0 = \sup(\dom(p_1)) + 1$, and for each $f \in A_0$, let $X_f = \{ \alpha : f(\alpha) < \beta_0 \}$.  Since each $f$ is injective, each $|X_f| <\kappa$.  For each $X_f$, let $\langle X_f(i) : i < \beta_f \rangle$ be an enumeration of $X_f$ that agrees in order with the above enumeration of $Z$.

Case 2a: There is no $i < \kappa$ such that $| \{ X_f(i) : f \in A_0 \} | = \kappa^+$.  Then there is a $\gamma < \kappa^+$ such that for all $f \in A_0$, $\{ i : \alpha_i \in X_f \} \subseteq \gamma$.  Since $\kappa^{<\kappa} = \kappa$, we may choose some $A_1 \subseteq A_0$ such that for all $f \in A_1$, $X_{f}$ is the same set $S$, and further that $f \restriction S$ is the same for all $f \in A_1$.

Let $f_0,f_1 \in A_1$, and let $D = \{ \alpha \in \dom(f_0) \cap \dom(f_1) : f_0(\alpha) \not= f_1(\alpha) \}$.  $|D|<\kappa$, $D \cap S = \emptyset$, and if $\alpha \in D$, then $f_0(\alpha),f_1(\alpha) \geq \beta_0$.  Thus we can define a $q \leq p_1$ such that for all $\alpha \in D$, $q \circ f_0(\alpha) = q \circ f_1(\alpha) = 0$.  $q$ forces that $g \circ f_0$ and $g \circ f_1$ are compatible, contradiction.

Case 2b: There is some $i < \kappa$ such that $| \{ X_f(i) : f \in A_0 \} | = \kappa^+$.  Let $i_0$ be the least such ordinal.  We choose a sequence $\langle f_\alpha : \alpha < \kappa^+ \rangle$.  Let $f_0 \in A_0$ be arbitrary.  Let $f_1$ be such that $X_{f_1}(i_0)$ has index in the enumeration of $Z$ above $\{ i : \alpha_i \in \dom(f_0) \}$.  Keep going in this fashion such that for $\beta < \gamma < \kappa^+$, $X_{f_\gamma}(i_0)$ has index greater than $\sup \{ i : \alpha_i \in \dom(f_\beta) \}$.  By the minimality of $i_0$, there is $C \subseteq \kappa^+$ of size $\kappa^+$ and a set $S \subseteq Z$ such that for all $\alpha \in C$, $\{ X_{f_\alpha}(i) : i < i_0 \} = S$, and $f_\alpha \restriction S$ is the same.

Now let $\beta < \gamma$ be in $C$, and let $D = \{ \alpha \in \dom(f_\beta) \cap \dom(f_\gamma) : f_\beta(\alpha) \not= f_\gamma(\alpha) \}$.  As before, $|D|<\kappa$ and $D \cap S = \emptyset$.  If $\alpha \in D$, then $f_\gamma(\alpha) \geq \beta_0$, because $X_{f_\gamma} \cap \dom(f_\beta) = S$.  We construct $q \leq p_1$ such that for all $\alpha \in D$, $q \circ f_\gamma(\alpha) = q \circ f_\beta(\alpha)$.  Let $D_{0} = \{ \alpha \in D :  f_{\beta}(\alpha) \in \dom(p_{1}) \}$, and let $q_{0} = p_{1} \cup \{ \langle f_{\gamma}(\alpha), p_{1} \circ f_{\beta}(\alpha) \rangle : \alpha \in D_{0} \}$.  We are free to do this because $f_{\gamma}$ is injective and $f_{\gamma}(\alpha) \notin \dom(p_1)$ for $\alpha \in D$.

Note that for all $\alpha \in D$, $q_0$ is defined at $f_\gamma(\alpha)$, only if it is defined at $f_\beta(\alpha)$.  But it may be that for some $\alpha \in D_0$ and some $\alpha^\prime \in D \setminus D_0$, $f_\gamma(\alpha) = f_\beta(\alpha^\prime)$.  Assume we have a sequence $q_0 \geq  ... \geq q_n$ such that:

\begin{enumerate}[(1)]
\item for all $k \leq n$, $D \cap f^{-1}_\gamma[\dom(q_k)] \subseteq D \cap f^{-1}_\beta[\dom(q_k)]$,
\item for all $k \leq n$, $q_k \circ f_\gamma \restriction (D \cap f^{-1}_\gamma[\dom(q_k)]) = q_k \circ f_\beta \restriction (D \cap f^{-1}_\gamma[\dom(q_k)])$,
\item if $k+1 \leq n$, then $D \cap f^{-1}_\gamma[\dom(q_{k+1})] = D \cap f^{-1}_\beta[\dom(q_k)]$.
\end{enumerate}
If $D \cap f^{-1}_\gamma[\dom(q_n)] = D \cap f^{-1}_\beta[\dom(q_n)]$, let $q_{n+1} = q_n$.  Otherwise, let $D_{n+1} = D \cap f^{-1}_\beta[\dom(q_n)]$, and let $q_{n+1} = q_n \cup \{ \langle f_{\gamma}(\alpha), q_n \circ f_{\beta}(\alpha) \rangle : \alpha \in D_{n+1} \}$.  Clearly the induction hypotheses are preserved for $n+1$.


Put $q_\omega = \bigcup q_n$.  (Note in the case $\kappa= \omega$, $D$ is finite, so $q_\omega = q_n$ for some $n$.)  By (1) and (3), $D \cap f^{-1}_\beta[\dom(q_\omega)] = D \cap f^{-1}_\gamma[\dom(q_\omega)]$, so call this set $D_\omega$.  Let  $q = q_\omega \cup \{ \langle f_\beta(\alpha) , 0 \rangle : \alpha \in D \setminus D_\omega \} \cup \{ \langle f_\gamma(\alpha) , 0 \rangle : \alpha \in D \setminus D_\omega \}$.  This $q$ forces that $g \circ f_\beta$ and $g \circ f_\gamma$ are compatible, again in contradiction to the assumption about $p_1$. $\square$

\end{proof}

\begin{corollary}Assume $\kappa$ is a regular cardinal, $2^{< \kappa} = \kappa$, and $\lambda > \kappa$.  Then there is a $\kappa$-closed, $\kappa^+$-c.c. forcing that adds a coherent, Suslin $(\kappa^+,\lambda,2)$-forest.
\end{corollary}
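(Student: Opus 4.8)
The plan is to compose the two forcings produced earlier in this section. Let $\mathbb{P}$ be the $\kappa$-closed, $\kappa^+$-c.c.\ partial order of Theorem~\ref{injections}, which forces that there is a coherent $(\kappa^+,\lambda,\kappa)$-forest of injective functions. Working in $V^{\mathbb{P}}$, I would then force with $Add(\kappa)$ to adjoin a Cohen subset of $\kappa$, so the forcing witnessing the corollary is the two-step iteration $\mathbb{Q} = \mathbb{P} * \dot{Add(\kappa)}$.

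Before invoking Theorem~\ref{tod} over $V^{\mathbb{P}}$, I would confirm that its hypotheses are met there. Because $\mathbb{P}$ is $\kappa$-closed it adds no new sequences of length $<\kappa$, hence no new bounded subsets of $\kappa$; therefore $\kappa$ remains regular and $(2^{<\kappa})^{V^{\mathbb{P}}} = (2^{<\kappa})^{V} = \kappa$. So in $V^{\mathbb{P}}$ we have a regular $\kappa$ with $2^{<\kappa} = \kappa$ together with a coherent $(\kappa^+,\lambda,\kappa)$-forest of injections, which is exactly what Theorem~\ref{tod} requires. Applying that theorem inside $V^{\mathbb{P}}$, the generic Cohen subset of $\kappa$ yields a coherent, Suslin $(\kappa^+,\lambda,2)$-forest in $V^{\mathbb{P} * \dot{Add(\kappa)}} = V^{\mathbb{Q}}$.

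It remains to read off the two properties claimed for $\mathbb{Q}$, and this is where the only genuine bookkeeping lies. The two-step iteration of $\kappa$-closed forcings is $\kappa$-closed, and $Add(\kappa)$ is $\kappa$-closed, so $\mathbb{Q}$ is $\kappa$-closed. For the chain condition I would use the standard fact that for regular $\theta$, if $\mathbb{P}$ is $\theta$-c.c.\ and $\Vdash_{\mathbb{P}} \dot{\mathbb{Q}}$ is $\theta$-c.c., then $\mathbb{P} * \dot{\mathbb{Q}}$ is $\theta$-c.c. With $\theta = \kappa^+$, the first factor is $\kappa^+$-c.c.\ by Theorem~\ref{injections}, and since $(2^{<\kappa})^{V^{\mathbb{P}}} = \kappa$ gives $\kappa^{<\kappa} = \kappa$ there, the poset $Add(\kappa)$ has size $\kappa$ in $V^{\mathbb{P}}$ and is trivially $\kappa^+$-c.c.\ (being too small to contain an antichain of size $\kappa^+$). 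Hence $\mathbb{Q}$ is $\kappa$-closed and $\kappa^+$-c.c. The only subtlety to watch is that the cardinal arithmetic feeding Theorem~\ref{tod} must be verified in the intermediate model rather than in $V$; the $\kappa$-closure of $\mathbb{P}$ supplies this for free, so no assumption beyond $2^{<\kappa} = \kappa$ is needed, and in particular the $2^\kappa = \kappa^+$ hypothesis of the earlier Jech-style construction can be dropped while improving the chain condition from $\kappa^{++}$ to $\kappa^+$.
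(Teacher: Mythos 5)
Your proposal is correct and is exactly the paper's intended argument: the paper's proof of this corollary is literally ``Apply Theorems~\ref{injections} and~\ref{tod}'', i.e.\ force with the iteration $\mathbb{P} * \dot{Add(\kappa)}$, and your write-up just fills in the routine verifications (preservation of $2^{<\kappa}=\kappa$ and regularity of $\kappa$ under the $\kappa$-closed first step, closure and chain condition of the iteration) that the paper leaves implicit.
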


\begin{proof}Apply Theorems~\ref{injections} and~\ref{tod}.  $\square$
\end{proof}

Large Suslin forests can also be obtained from combinatorial principles rather than forcing.  As reported by Jech \cite{jech1} \cite{jech3} \cite{jech73}, Laver proved in unpublished work that the existence of Suslin $(\omega_1,\omega_2,2)$-forests follows from Silver's principle $W$ and $\diamondsuit$, both of which hold in $L$.  Unfortunately, Laver's proof seems to be lost to history.  In trying to reconstruct it, we encountered technical issues that led to the development of a new combinatorial principle, which we prove consistent from a Mahlo cardinal, that can be used to construct large Suslin forests.  The main appeal for us is that, unlike the above forcing constructions, it allows a Suslin $(\kappa,\kappa^+,2)$-forest to be generically added to any model with sufficiently large cardinals using a forcing of size $\kappa$ rather than $\kappa^+$.




Let us establish some notation concerning trees. Suppose $T$ is a $\kappa$-tree and $\alpha<\kappa$. $T_\alpha$ is the set of nodes at level $\alpha$.  If $b$ is a cofinal branch in $T$, $\pi_\alpha(b)$ is the node at level $\alpha$ in $b$.  If $\beta < \alpha$, and $x \in T_\alpha$, $\pi_{\alpha,\beta}(x)$ is the node in $T_\beta$ below $x$.

\begin{definition}
$W_\kappa(\lambda)$ is the statement that there is a $\kappa$-tree $T$, a set of cofinal branches $B$, and a sequence $\langle W_\alpha : \alpha < \kappa \rangle$ with the following properties:
\begin{enumerate}[(1)]
\item $|B| = \lambda$.
\item For each $\alpha$, $|W_\alpha| < \kappa$, and $W_\alpha \subseteq \mathcal{P}(T_\alpha)$.
\item For every $z \in \mathcal{P}_\kappa(B)$, there is an $\alpha < \kappa$ such that for all $\beta \geq \alpha$, $\pi_\beta[z] \in W_\beta$.
\end{enumerate}
\end{definition}

Let $T$, $B$, $\langle W_\alpha : \alpha < \kappa \rangle$ be as above.  If $z \in \mathcal{P}_\kappa(B)$, say ``\emph{$z$ is captured at $\alpha$}'' when for all $\beta \geq \alpha$, $\pi_\beta[z] \in W_\beta$ and $\pi_\beta \restriction z$ is injective.  If $z \in W_\alpha$ and $\gamma < \alpha$, say ``\emph{$z$ is captured at $\gamma$}'' when for all $\beta$ such that $\gamma \leq \beta < \alpha$, $\pi_{\alpha,\beta}[z] \in W_\beta$ and $\pi_{\alpha,\beta} \restriction z$ is injective.

\begin{definition}$W_{\kappa}^*(\lambda)$ asserts $W_{\kappa}(\lambda)$, and that there exists a stationary $S \subseteq \kappa$ and a sequence $\langle A_\alpha : \alpha < \kappa \rangle$ with each $A_\alpha \subseteq W_\alpha^2$, such that the following additional clauses hold:

\begin{enumerate}[(1)]
\setcounter{enumi}{3}
\item $\kappa = \mu^+$ for a regular cardinal $\mu$, and each $W_\alpha$ is a $\mu$-complete subalgebra of $\mathcal{P}(T_\alpha)$ containing all singletons. 

\item For all $\alpha \in S$, $\{ z \in W_\alpha : z$ is captured below $\alpha \}$ is closed under arbitrary $<\mu$ sized unions and taking subsets which are in $W_\alpha$.

\item If $f : \kappa \to \mathcal{P}_\kappa(B)^2$ is such that $| \bigcup_{\alpha < \kappa} f_0(\alpha) \cup f_1(\alpha)| = \kappa$, let $\langle b_\alpha : \alpha < \kappa \rangle$ enumerate the elements of $\bigcup_{\alpha < \kappa} f_0(\alpha) \cup f_1(\alpha)$.  The set of $\alpha \in S$ with the following properties is stationary:
\begin{enumerate}[(a)]
\item $\{ b_\beta : \beta < \alpha \}$ is captured at $\alpha$.
\item If $z \subseteq \{ \pi_\alpha(b_\beta) : \beta < \alpha \}$ is captured below $\alpha$, then $\sup \{ \beta : \pi_\alpha(b_\beta) \in z \} < \alpha$.
\item $\{ \langle \pi_\alpha[ f_0(\beta) ],\pi_\alpha[ f_1(\beta) ] \rangle : \beta < \alpha \} = A_\alpha$.
\end{enumerate}

\end{enumerate}
\end{definition}

\begin{remark}
It is easy to see that $W_\kappa(\lambda)$ implies $2^{<\kappa} = \kappa$, and in fact $W_\kappa(\kappa)$ is equivalent to $2^{<\kappa} = \kappa$. If $\kappa = \mu^+$ and $S$ forms part of the witness to $W^*_\kappa(\lambda)$, then clause (4) implies $\mu^{<\mu} = \mu$, and clause (6) can be used to show $\diamondsuit_\kappa(S)$.  On the other hand, it follows from the next theorem that $W^*_\kappa(\lambda)$ prescribes no value for $2^\kappa$, besides that $\lambda \leq 2^\kappa$.
\end{remark}

\begin{theorem}
Suppose $\kappa$ is a Mahlo cardinal and $\mu < \kappa$ is regular.  If $G * H \subseteq Col(\mu,<\kappa) * Add(\kappa)$ is generic, then $V[G*H]$ satisfies $W^*_\kappa(2^\kappa)$.
\end{theorem}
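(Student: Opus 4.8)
The plan is to read the witness $(T,B,\langle W_\alpha\rangle,S,\langle A_\alpha\rangle)$ off the two-step generic, using the Mahlo-ness of $\kappa$ for a stationary set of reflection points and the genericity of $Add(\kappa)$ for a tree with the maximal number of branches. Write $\mathbb P=Col(\mu,<\kappa)$ and $\mathbb Q=Add(\kappa)$, and for $\gamma<\kappa$ put $G_\gamma=G\cap Col(\mu,<\gamma)$ and $H_\gamma=H\restriction\gamma$. First I would record the forcing analysis: $\mathbb P$ is $\mu$-closed and $\kappa$-c.c., collapses $(\mu,\kappa)$ to $\mu$, and forces $\kappa=\mu^+$, $2^{<\kappa}=\kappa$ and $\mu^{<\mu}=\mu$; while $\mathbb Q$ is $\kappa$-closed, adds no bounded subsets of $\kappa$, and I write $\lambda:=(2^\kappa)^{V[G*H]}$. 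Since $\mathbb Q$ adds no subsets of $\mu$, $2^\mu=\mu^+$ persists into $V[G*H]$, so $\diamondsuit_\kappa$ holds there by Shelah's theorem. Using that $\kappa$ is Mahlo, fix a stationary $S\subseteq\kappa$ consisting of cardinals inaccessible in $V$. For $\gamma\in S$ the forcing factors as $(\mathbb P*\mathbb Q)\restriction\gamma$, of size $<\kappa$, followed by a $<\mu$-closed tail, and $V[G_\gamma*H_\gamma]\models\gamma=\mu^+$ with $(2^\mu)^{V[G_\gamma*H_\gamma]}=\gamma$. This factorization, together with the inaccessibility of $\gamma$, is the source of all the reflection I use.

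Next I would build $T$ and $B$. After $\mathbb P$ alone there are no $\kappa$-Kurepa trees (the Levy collapse of an inaccessible kills them), so no witness to $W_\kappa(\lambda)$ with $\lambda>\kappa$ can live in $V[G]$; the whole role of $\mathbb Q$ is to re-introduce one generically. In $V[G*H]$, using $2^{<\kappa}=\kappa$, I take $T$ to be the generic $\kappa$-Kurepa tree defined from $H$, arranged so that each level $T_\alpha$ has size $\le\mu$ and lies in $V[G_\alpha*H_\alpha]$, and let $B$ be its $\lambda=2^\kappa$ cofinal branches with $\pi_\alpha$ the level-$\alpha$ projection. For every $\alpha$ I set
\[ W_\alpha:=\mathcal P(T_\alpha)\cap V[G_\alpha*H_\alpha]. \]
Each such $W_\alpha$ has size $\le\mu$ (any $\delta<\kappa$ is collapsed to $\mu$, and $(2^\mu)^{V[G_\alpha*H_\alpha]}<\kappa$), and by the $<\mu$-closure of the tail no $<\mu$-sequence from $V[G_\alpha*H_\alpha]$ is added, so $W_\alpha$ is a $\mu$-complete subalgebra of $\mathcal P(T_\alpha)$ containing all singletons. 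This already gives clauses (1), (2) and (4) with $\kappa=\mu^+$.

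The heart of the argument is the capturing clause (3), in its strong ``captured'' form: given $z\in\mathcal P_\kappa(B)$, so $|z|\le\mu$, I must find $\alpha$ with $\pi_\beta[z]\in W_\beta$ and $\pi_\beta\restriction z$ injective for all $\beta\ge\alpha$. Injectivity for large $\beta$ is immediate, since distinct branches of a Kurepa tree eventually split. Membership is the real problem: $\pi_\gamma[z]$ is a size-$\mu$ subset of $T_\gamma$, and mere $<\mu$-closure does not place it in $V[G_\gamma*H_\gamma]$. I would instead fix a name $\dot z$ and run a reflection-of-names argument: for club-many $\gamma\in S$, the part of $\dot z$ living below $\gamma$ already computes $\pi_\gamma[z]$ inside $V[G_\gamma*H_\gamma]$, using the factorization above and the inaccessibility of $\gamma$ (so that the $\mu$-sized name-data reflects below $\gamma$). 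Coherence of the maps $\pi_{\beta,\gamma}$ then propagates membership up the whole tail of levels. This step, where both the closure of $\mathbb P$ and the genericity of $T$ are indispensable, is the main obstacle of the proof.

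It remains to produce the starred data. Clause (5) follows because ``captured below $\alpha$'' unwinds, by the definition of the $W_\beta$, to membership of the relevant projections in the intermediate models, a condition preserved under the $<\mu$-sized unions and the $W_\alpha$-subsets permitted by $\mu$-completeness. For the guessing clause (6) I would apply $\diamondsuit_\kappa(S)$ in $V[G*H]$ to obtain $\langle A_\alpha:\alpha<\kappa\rangle$ guessing the coded sequence $\langle\langle\pi_\alpha[f_0(\beta)],\pi_\alpha[f_1(\beta)]\rangle:\beta<\alpha\rangle$ on a stationary set, then combine this with the reflection at $S$ and a pressing-down argument to secure (a), (b) and (c) together. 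The genuine difficulty, beyond capturing itself, is the simultaneity: forcing the diamond guess, the injective capturing, and the boundedness of (6b) to coincide at stationarily many $\gamma\in S$ while each $W_\gamma$ has size only $\mu$. Making the reflection supplied by the Mahlo cardinal, the genericity of the $Add(\kappa)$-Kurepa tree, and the closure of $Col(\mu,<\kappa)$ interlock correctly is where the real work lies.
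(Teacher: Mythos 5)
Your architecture is inverted, and the error starts with your central claim that ``after $\mathbb P$ alone there are no $\kappa$-Kurepa trees.'' The Levy collapse $Col(\mu,<\kappa)$ of an inaccessible kills Kurepa trees of height $\mu$, not of height $\kappa$; in fact it \emph{creates} a $\kappa$-Kurepa tree, namely the ground-model complete binary tree $T=(2^{<\kappa})^V$, whose levels have $V$-size $<\kappa$ by inaccessibility (hence size $\le\mu$ in $V[G]$) and which keeps its $(2^\kappa)^V\ge\kappa^+$ branches. This is exactly the witness the paper uses: $T$ and $B$ come from $V$, $W_\alpha=\mathcal P(T_\alpha)^{V[G_\alpha]}$, and $S$ is the set of $V$-inaccessibles; so a witness to $W_\kappa(2^\kappa)$ (clauses (1)--(3)) lives already in $V[G]$, contrary to your premise. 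Taking the tree from the ground model is also what makes your ``main obstacle,'' clause (3), trivial: any $z\in\mathcal P_\kappa(B)$ is a $<\kappa$-sized set of $V$-objects, hence lies in $V[G]$ by the $\kappa$-closure of $Add(\kappa)$, hence in a single $V[G_\alpha]$ by the $\kappa$-c.c.\ of the collapse, whence $\pi_\beta[z]\in V[G_\alpha]\cap\mathcal P(T_\beta)\subseteq W_\beta$ for \emph{all} $\beta\ge\alpha$ at once. With your generic tree the branches are new objects, so a $<\kappa$-sized set of them lies in no intermediate model, and nothing in your ``reflection of names'' sketch forces its projections to be eventually captured; moreover, the claim that $Add(\kappa)$ adds a $\kappa$-Kurepa tree with $2^\kappa$ branches whose levels sit in the models $V[G_\alpha*H_\alpha]$ is itself unproved, and unnecessary.

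Because you spend $H$ on the tree, you are forced to get clause (6) from a diamond, and your source for that diamond fails. Shelah's theorem requires $\mu$ uncountable (here $\mu=\omega$ is allowed, and CH does not imply $\diamondsuit_{\omega_1}$), and even for uncountable $\mu$ it yields $\diamondsuit_\kappa(S)$ only for stationary $S$ consisting of points of cofinality $\ne\cf(\mu)$; but every $V$-inaccessible $\alpha$ has cofinality exactly $\mu$ in $V[G*H]$ (it is collapsed to size $\mu$ while the whole iteration is $\mu$-closed), so your $S$ lies entirely inside the one cofinality that the theorem does not cover. The paper instead spends the genericity of $H$ precisely here: in $V[G]$ fix enumerations $\langle s^\alpha_\beta:\beta<\mu\rangle$ of $W_\alpha^2$ and let $A_\alpha=\{s^\alpha_\beta : H(\alpha+\beta)=1\}$; clause (6) is then \emph{forced} by a density argument over $V[G]$ --- given names $\dot f$, $\dot C$ and a condition $p_0$, build a continuous decreasing $\kappa$-chain in $Add(\kappa)$ deciding $\dot f$ and the branch enumeration, use the $\kappa$-c.c.\ of the collapse to get a club of $\alpha$ where the decided data lies in $V[G_\alpha]$, pick a reflecting $\alpha\in S$, and extend $p_\alpha$ on the block $[\alpha,\alpha+\mu)$ so that $A_\alpha$ is exactly the required guess. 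Note finally that clause (6) is not plain $\diamondsuit_\kappa$: it guesses projections of functions into $\mathcal P_\kappa(B)^2$ with $|B|=2^\kappa$ possibly much larger than $\kappa^+$, simultaneously with the capturing conditions (a) and (b); any reduction to diamond needs the trace at $\alpha$ to be computable from a $<\kappa$-sized piece of a $\kappa$-code, which is plausible for the ground-model binary tree (where $\pi_\alpha(b)=b\restriction\alpha$) but is nowhere available for your generic tree. So both of the steps you flag as ``where the real work lies'' are genuine gaps, and the second rests on a theorem that provably does not apply.
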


\begin{proof}
In $V$, let $T$ be the complete binary tree on $\kappa$, and let $B$ be the set of all branches.  For $\alpha < \kappa$, let $G_\alpha = G \cap Col(\mu,<\alpha)$, and let $W_\alpha = \mathcal{P}(T_\alpha)^{V[G_\alpha]}$.  Let $S = \{ \alpha < \kappa : \alpha$ is inaccessible in $V \}$.  In $V[G]$, fix enumerations $\langle s^\alpha_\beta : \beta < \mu \rangle$ of the $W_\alpha^2$, and in $V[G*H]$, let $A_\alpha = \{ s^\alpha_\beta : H(\alpha+\beta) = 1 \}$.  Let us check each condition.  

\begin{enumerate}[(1)]

\item $(2^\kappa)^V = (2^\kappa)^{V[G*H]}$, so $V[G*H] \vDash |B| = 2^\kappa$.

\item Since $\kappa$ is inaccessible, each $W_\alpha$ is collapsed to $\mu$.

\item Suppose $z \in \mathcal{P}_\kappa(B)$.  There is some $\alpha < \kappa$ such that $z \in V[G_\alpha]$.  For $\beta \geq \alpha$, $\pi_\beta[z] \in W_\beta$.

\item The regularity of $\mu$ is preserved, and clearly each $W_\alpha$ contains all singletons.  Let $\langle a_\xi : \xi < \delta \rangle \subseteq W_\alpha$ with $\delta < \mu$. Each $a_\xi \in A$ is $\tau_\xi^{G_\alpha}$ for some $Col(\mu,<\alpha)$-name $\tau_\xi$.  By the $\mu$-closure of $Col(\mu,<\kappa)$, $\langle \tau_\xi : \xi < \delta \rangle \in V$, so $\langle a_\xi : \xi < \delta \rangle \in V[G_\alpha]$.

\item By the Mahlo property, $S$ is stationary, and by the $\kappa$-c.c. of $Col(\mu,<\kappa)$ and $\kappa$-closure of $Add(\kappa)$, it remains stationary in $V[G*H]$.  Suppose $\alpha \in S$.

\begin{enumerate}
\item Unions: Let $A \in \mathcal{P}_\mu(W_\alpha)$ have the property that all $a$ in $A$ are captured below $\alpha$.  As above, $A \in V[G_\alpha]$.  Now in $V[G_\alpha]$, $\alpha = \mu^+$ and $|T_\beta| = \mu$ for $\beta< \alpha$.  So if $\pi_{\alpha,\beta} \restriction a$ is injective, then $V[G_\alpha] \vDash |a| < \alpha$, and thus $V[G_\alpha] \vDash | \bigcup A | < \alpha$.  For distinct $x,y \in \bigcup A$, let $\gamma_{x,y} < \alpha$ be the least $\gamma$ such that $\pi_{\alpha,\gamma}(x) \not= \pi_{\alpha,\gamma}(y)$. We have $\gamma = \sup\{\gamma_{x,y} : x,y \in \bigcup A \} < \alpha$.  Hence if $\gamma \leq \beta < \alpha$ and all $a \in A$ are captured at $\beta$, then $\bigcup A$ is captured at $\beta$.
\item Subsets: Suppose $z_0 \in W_\alpha$ is captured below $\alpha$, and $z_1 \in W_\alpha$ is a subset of $z_0$.  Then $V[G_\alpha] \vDash |z_1| < \alpha$, so by the $\alpha$-c.c. of $Col(\mu,<\alpha)$, there is some $\beta < \alpha$ such that $z_1 \in V[G_\beta]$.  Thus $z_1$ is captured below $\alpha$.
\end{enumerate}

\item First work in $V[G]$.  Let $\dot{f}$ be an $Add(\kappa)$-name for a function from $\kappa$ to $\mathcal{P}_\kappa(B)^2$, and let $\langle \dot{b}_\alpha : \alpha < \kappa \rangle$ be as in clause (6).  Let $\dot{C}$ be a name for a club, and let $p_0 \in Add(\kappa)$ be arbitrary.  Build a continuous decreasing chain of conditions below $p_0$, $\langle p_\alpha : \alpha < \kappa \rangle \subseteq Add(\kappa)$, and a continuous increasing chain of ordinals, $\langle \xi_\alpha : \alpha < \kappa \rangle \subseteq \kappa$, with the following properties:   For all $\alpha$,
\begin{itemize}
\item $p_{\alpha+1} \Vdash \xi_\alpha \in \dot{C}$,
\item $p_{\alpha+1}$ decides $\dot{f} \restriction \dom(p_\alpha)$ and $\{ \dot{b}_\beta : \beta < \alpha \}$,
\item $\dom(p_{\alpha+1})$ is an ordinal $> \max \{ \dom(p_\alpha),\xi_\alpha, \alpha \}$, and
\item $\xi_{\alpha+1} > \dom(p_{\alpha+1})$.
\end{itemize}

Let $g : \kappa \to \mathcal{P}_\kappa(B)^2$ and $\{ b_\alpha : \alpha < \kappa \}$ be the objects defined by what the chain $\langle p_\alpha : \alpha < \kappa \rangle$ decides.  For each $\alpha < \kappa$, there is a predense set $E_\alpha \subseteq Col(\mu,<\kappa)$ of size $<\kappa$ such that $g(\alpha)$ and $b_\alpha$ are decided by elements of $E_\alpha$.  There is a club $D \in V$ such that $\forall \alpha \in D$, $\forall \beta < \alpha$, $E_\beta \subseteq Col(\mu,<\alpha)$.  For $\alpha \in D$, $g \restriction \alpha$ and $\{ b_\beta : \beta < \alpha \}$ are in $V[G_\alpha]$.




Back in $V[G]$, for $\alpha < \kappa$, let $\gamma_\alpha$ be the least $\gamma \geq \alpha$ such that $\pi_{\gamma_\alpha} \restriction \{ b_\beta : \beta < \alpha \}$ is injective.  If $\alpha$ is closed under $\beta \mapsto \gamma_\beta$, then $\gamma_\alpha = \alpha$.  As $S$ is stationary, there is $\alpha \in S \cap D$ such that $\gamma_\alpha = \alpha$, $\xi_\alpha = \alpha$, and $p_\alpha \Vdash \alpha \in \dot{C}$.  We have that $\{ b_\beta : \beta < \alpha \}$  is captured at $\alpha$, and that $\{ \langle \pi_\alpha [g_0(\beta)],\pi_\alpha [g_1(\beta)] \rangle : \beta < \alpha \} \subseteq W_\alpha^2$.  Since $\alpha$ is inaccessible in $V$, if $z \subseteq \{\pi_\alpha(b_\beta) : \beta < \alpha \}$ is captured below $\alpha$, then $V[G_\alpha] \vDash |z| < \alpha$, so $\{ \beta : \pi_\alpha(b_\beta) \in z \}$ is bounded below $\alpha$.


Let $q \leq p_\alpha$ be such that for $\beta <\mu$, $q(\alpha + \beta) = 1$ if $s^\beta_\alpha = \langle \pi_\alpha [g_0(\beta)],\pi_\alpha [g_1(\beta)] \rangle$, and $q(\alpha + \beta) = 0$ otherwise.  Then $q \Vdash \alpha \in \dot{C} \cap S$, and that items (a), (b), and (c) in clause (6) hold at $\alpha$.  As $p_0$ was arbitrary, clause (6) is forced.  $\square$



\end{enumerate}
\end{proof}

\begin{question}
Can $W^*_\kappa(\lambda)$ be forced without the use of large cardinals?  Can it be forced in a cardinal-preserving way?  Does $L \vDash$ ``For all regular $\kappa$, $W^*_{\kappa^+}(\kappa^{++})$''?
\end{question}

\begin{theorem}
$W^*_\kappa(\lambda)$ implies there is a coherent, Suslin $(\kappa,\lambda,2)$-forest.
\end{theorem}

\begin{proof}
Let $\kappa = \mu^+$, and let $T$, $B$, $\langle W_\alpha : \alpha < \kappa \rangle$, $\langle A_\alpha : \alpha < \kappa \rangle$, and $S \subseteq \kappa$ witness $W^*_\kappa(\lambda)$.  We will construct a sequence of functions $\langle f_\alpha : \alpha < \kappa \rangle$  on the nodes of $T$ that will generate a coherent family of functions on $B$ with the desired properties.  Each $f_\alpha$ will have domain $T_\alpha$ and range contained in $\{0,1\}$.  



Let $f_0$ be a function from $T_0$ to 2.  Assume we have have constructed a sequence of functions $\langle f_\beta : \beta < \alpha \rangle$, with each $f_\beta : T_\beta \to 2$, satisfying the following property:

\begin{enumerate}[($*$)]
\item If $r \in W_\beta$ is captured at $\gamma<\beta$, then $f_\beta \restriction r$ disagrees with $f_\gamma  \circ \pi_{\beta,\gamma} \restriction r$ on a set of size $< \mu$.
\end{enumerate}


Let $R_\alpha = \{ r \in W_\alpha : r$ is caputured below $\alpha \}$.  Consider the set $F_\alpha$ of partial functions on $T_\alpha$ of the form $f_\gamma \circ \pi_{\alpha,\gamma} \restriction r$ for $r \in R_\alpha$ and $\gamma$ witnessing its membership in $R_\alpha$.  Assume $\gamma_0 < \gamma_1$ and $f_{\gamma_0} \circ \pi_{\alpha,\gamma_0} \restriction r_0$ and $f_{\gamma_1} \circ \pi_{\alpha,\gamma_1} \restriction r_1$ are in $F_\alpha$. By hypothesis $(*)$, $f_{\gamma_1}$ disagrees with $f_{\gamma_0} \circ \pi_{\gamma_1,\gamma_0}$ at less than $\mu$ many points in $\pi_{\alpha,\gamma_1}[r_0]$.  Therefore, there are less than $\mu$ many points in $r_0 \cap r_1$ at which $f_{\gamma_0} \circ \pi_{\alpha,\gamma_0}$ and $f_{\gamma_1} \circ \pi_{\alpha,\gamma_1}$ disagree.  So $F_\alpha$ is a $\mu$-coherent family.  


Assume first that $\alpha \notin S$.  Using Lemma~\ref{ext}(a), let $f_\alpha : T_\alpha \to 2$ be such that $\{ f_\alpha \} \cup F_\alpha$ is $\mu$-coherent.  Then $(*)$ holds for $\langle f_\beta : \beta \leq \alpha \rangle$.

Now assume $\alpha \in S$.  Let $H_\alpha$ be the closure of $F_\alpha$ under $< \mu$ modifications. Consider $H_\alpha$ as a partial order with $f \leq g$ iff $f \supseteq g$.  The set $A_\alpha \subseteq W_\alpha^2$ codes a set of relations from subsets of $T_\alpha$ to 2.  If $\langle a_0,a_1 \rangle \in A_\alpha$, construct a relation $h$ by putting $\langle x,i \rangle \in h$ iff $x \in a_i$, and call the set of all such things $A^\prime_\alpha$.  It may be the case that every member of $A^\prime_\alpha$ is a function and a member of $H_\alpha$, and that $A^\prime_\alpha$ is a maximal antichain in $H_\alpha$.  If not, ignore all these considerations, and let $f_\alpha$ be as in the case $\alpha \notin S$, so that $(*)$ is preserved.



Suppose $A^\prime_\alpha$ is a maximal antichain in $H_\alpha$. Enumerate $R_\alpha$ as $\langle r_\beta : \beta < \mu \rangle$.  By clauses (4) and (5) of the definition of $W^*$, $R_\alpha$ is closed under unions of size $<\mu$.  $H_\alpha$ is also a $\mu$-closed partial order.  If $\langle h_i : i < \beta < \mu \rangle$ is a decreasing sequence, then $\bigcup_{i<\beta} \dom(h_i) = r \in R_\alpha$, so let $\gamma$ witness this.   By $(*)$, each $h_i$ disagrees with $f_\gamma \circ \pi_{\alpha,\gamma}$ on a set of size $< \mu$, and so $\bigcup_{i<\beta} h_i$ does as well by the regularity of $\mu$.

Setting $s_\beta = \bigcup_{\xi<\beta} r_\xi$, we have $\langle s_\beta : \beta < \mu \rangle$ is an increasing cofinal sequence in $R_\alpha$.  For $\beta < \mu$, let $\gamma_\beta$ be the least $\gamma < \alpha$ that witnesses $s_\beta \in R_\alpha$.  Let $\langle t_\beta : \beta < \mu \rangle$ enumerate all $<\mu$ sized subsets of $T_\alpha$, such that each subset is repeated $\mu$ many times.  For a partial function $f : T_\alpha \to 2$ and $\beta < \mu$, let $f / t_\beta$ be $f$ with its output values switched at the points in $\dom(f) \cap t_\beta$.

We will define $f_\alpha$ inductively as $\bigcup_{\beta<\mu} h_\beta$.  Let $h_0 = \emptyset$. Assume  $\langle h_i : i < \beta \rangle$ has been chosen so that:
\begin{enumerate}[(1)]
\item for $i < j < \beta$, $h_i \subseteq h_j$;
\item for $i < \beta$, $\dom(h_i) = s_{\xi_i}$ where $\xi_i \geq i$, and $\xi_i > \xi_j$ for $j<i$;
\item for $i < \beta$, there is $a \in A^\prime_\alpha$ such that $h_{i+1} / t_i$ is a common extension of $h_{i} / t_i$ and $a$.
\end{enumerate}
Given $h_i$, there is some $a \in A^\prime_\alpha$ that is compatible with $h_i / t_i$.  Let $\xi_{i+1} > \xi_i$ be such that $s_{\xi_{i+1}} \supseteq \dom(a) \cup s_{\xi_i}$, and let $g \in H_\alpha$ be a common extension of $a$ and $h_i / t_i$ with domain $s_{\xi_{i+1}}$.  Let $h_{i+1} = g / t_i$.  Clearly (1)--(3) are preserved at successor steps.  At limit steps $\beta$, we set $h_\beta = \bigcup_{i<\beta} h_i$.  This is in $H_\alpha$ as well by $\mu$-closure, and the preservation of (1)--(3) is trivial.



The point is this: For every $t \in \mathcal{P}_\mu(T_\alpha)$, $f_\alpha / t$ extends some $a \in A^\prime_\alpha$.  For let $i < \mu$ be large enough that $s_{\xi_i} \supseteq t$ and $t_i = t$.  Then by (3), $h_{i+1}/t$ extends some $a \in A^\prime_\alpha$, and $h_{i+1}/t = (f_\alpha / t) \restriction s_{\xi_{i+1}}$.  We also check that $(*)$ is preserved at $\alpha$:  Every $r \in R_\alpha$ is covered by some $s_{\xi_i}$, and $f_\alpha \restriction s_{\xi_i} = h_i$, which coheres with $f_\gamma \circ \pi_{\alpha,\gamma} \restriction s_{\xi_i}$ when $s_{\xi_i}$ is captured at $\gamma$.

Now we define the forest.  For $z \in \mathcal{P}_\kappa(B)$, let $\gamma_z$ be the least $\gamma < \kappa$ such that $z$ is captured at $\gamma$.  Let $f_z : z \to 2$ be $f_{\gamma_z} \circ \pi_{\gamma_z} \restriction z$. Let $F$ be the closure of $\{ f_z : z \in \mathcal{P}_\kappa(B) \}$ under $< \mu$ modifications.  Note that by $(*)$, if $\beta \geq \gamma_z$, then $f_{\beta} \circ \pi_{\beta} \restriction z$ disagrees with $f_z$ at $< \mu$ many points.  Hence $F$ is a coherent $(\kappa,B,2)$-forest.

Finally, we verify the $\kappa$-c.c.  First note that $F$ satisfies the $\kappa^+$-c.c. by a delta-system argument.  So assume towards a contradiction that $A = \{ a_\alpha : \alpha < \kappa \}$ is a maximal antichain.  Let $z_\alpha = \dom(a_\alpha)$, and code each $a_\alpha$ as $\langle z_\alpha^0 , z_\alpha^1 \rangle$, where $z_\alpha^i = \{ b : a_\alpha(b) = i \}$.  Let $\langle b_\alpha : \alpha < \kappa \rangle$ enumerate the elements of $\bigcup_{\alpha < \kappa} z_\alpha$.  Define:

\begin{itemize}


\item $C_0 = \{ \alpha < \kappa : \bigcup_{\beta < \alpha} z_\beta = \{ b_\beta : \beta < \alpha \} \}$.

\item $C_1 = \{ \alpha < \kappa : \{ a_\beta : \beta < \alpha \}$ is a maximal antichain contained in $\{ f \in F : (\exists \eta < \alpha) \dom(f) \subseteq \{ b_\beta : \beta < \eta \} \} \}$.

\item $C_2 = \{ \alpha < \kappa : (\forall \beta < \alpha) \gamma_{z^0_\beta},\gamma_{z^1_\beta},\gamma_{z_\beta} < \alpha \}$.


\end{itemize}




It is easy to see that $C_0$, $C_1$, and $C_2$ are club. By clause (6) of the definition of $W^*$, let $\alpha \in S \cap C_0 \cap C_1 \cap C_2$ be such that $\{ b_\beta : \beta < \alpha \} = \bigcup_{\beta<\alpha} z_\beta$ is captured at $\alpha$, all $z \subseteq \{ \pi_\alpha(b_\beta) : \beta < \alpha \}$ captured below $\alpha$ have $\sup \{ \beta : \pi_\alpha(b_\beta) \in z \} < \alpha$, and  $A_\alpha = \{ \langle \pi_\alpha[z^0_\beta], \pi_\alpha[z^1_\beta] \rangle : \beta < \alpha \}$.

We claim $A^\prime_\alpha$ is a maximal antichain in $H_\alpha$.  For $\beta< \alpha$, $z_\beta$ is captured below $\alpha$ since $\alpha \in C_2$, so the function coded by $\langle \pi_\alpha[z^0_\beta], \pi_\alpha[z^1_\beta] \rangle$ is in $H_\alpha$.  If $h \in H_\alpha$ is incompatible with every member of $A^\prime_\alpha$, then consider $z = \{ b_\beta : \beta<\alpha$ and $\pi_\alpha(b_\beta) \in \dom(h) \}$, and let $f = h \circ \pi_\alpha \restriction z$.  Clauses (4) and (5) imply $\pi_\alpha[z]$ is captured below $\alpha$, so $\sup \{ \beta : b_\beta \in z \} < \alpha$.  Since $\alpha \in C_1$, $f$ is compatible with some $a_\beta$ with $\beta < \alpha$.  But $a_\beta$ is coded and projected down as $\langle \pi_\alpha[z^0_\beta], \pi_\alpha[z^1_\beta] \rangle \in A_\alpha$, so $h$ is compatible with some member of $A^\prime_\alpha$ after all.

Since $\{ b_\beta : \beta < \alpha \}$ is captured at $\alpha$, the construction has sealed this antichain.  Consider any other $f \in F$ such that $\dom(f) \supseteq \{ b_\beta : \beta < \alpha \}$.  Then $f \restriction \{ b_\beta : \beta < \alpha \}$ is a $<\mu$ modification of $f_\alpha \circ \pi_\alpha \restriction \{ b_\beta : \beta < \alpha \}$.  By the above argument, all $<\mu$ modifications of $f_\alpha$ extend a member of $A^\prime_\alpha$, and so $f$ is compatible with some $a_\beta$, $\beta<\alpha$.  This contradicts the assumption that $A = \{ a_\gamma : \gamma < \kappa \}$ is an antichain.  $\square$

\end{proof}


\begin{thebibliography}{10}

\bibitem{smallpideal}
Abraham, Uri; Todorčević, Stevo. \emph{Partition properties of\ $\omega_1$ compatible with CH.} Fund. Math. 152 (1997), no. 2, 165–181. 

\bibitem{eskew}
Eskew, Monroe.  \emph{Ulam's measure problem, saturated ideals, and cardinal arithmetic.}  In preparation.

\bibitem{jech1}
Jech, Thomas. \emph{\textbf{Set theory}}. Pure and Applied Mathematics. Academic Press [Harcourt Brace Jovanovich, Publishers], New York-London, 1978.

\bibitem{jech3}
Jech, Thomas.  \emph{\textbf{Set theory}}. The third millennium edition, revised and expanded. Springer Monographs in Mathematics. Springer-Verlag, Berlin, 2003. 

\bibitem{jech73}
Jech, Thomas. \emph{Some combinatorial problems concerning uncountable cardinals.} Ann. Math. Logic 5 (1972/73), 165–198.

\bibitem{kosz}
Koszmider, Piotr.  \emph{On coherent families of finite-to-one functions}. J. Symbolic Logic 58 (1993), no. 1, 128–138.

\bibitem{kunen}
Kunen, Kenneth.  \emph{\textbf{Set theory: An introduction to independence proofs}}.  Studies in Logic and the Foundations of Mathematics, 102. North-Holland Publishing Co., Amsterdam-New York, 1980.

\bibitem{todor}
Todorčević, Stevo.  \emph{Partitioning pairs of countable ordinals.}  Acta Math. 159 (1987), no. 3-4, 261–294.

\bibitem{pideal}
Todorčević, Stevo. \emph{A dichotomy for P-ideals of countable sets.} Fund. Math. 166 (2000), no. 3, 251–267. 

\bibitem{weiss}
Weiß, Christoph. \emph{Subtle and ineffable tree properties.}  Ph.D. Thesis, Ludwig Maximilians Universität München, 2010.
\end{thebibliography}
\end{document}